\newcommand{\RR}{\mathbb{R}}
\newcommand{\ZZ}{\mathbb{Z}}
\newcommand{\QQ}{\mathbb{Q}}
\newcommand{\PP}{\mathbb{P}}
\newcommand{\OO}{\mathcal{O}}
\newcommand{\restr}[1]{|_{#1}}
\newcommand{\Isom}{\textnormal{Isom}}
\newcommand{\Ext}{\textnormal{Ext}}
\newtheorem{proposition}{Proposition}
\newtheorem{definition}{Definition}
\newtheorem{remark}{Remark}
\newtheorem{lemma}{Lemma}
\newtheorem{conjecture}{Conjecture}
\title{The group of isometries of $K_0(\PP_n)$}
\author{Ivan Beldiev}
\newcommand{\Addresses}{{
  \bigskip
  \footnotesize

  \textsc{HSE University, Faculty of Computer Science, Pokrovsky Boulevard 11, Moscow,
109028 Russia}\par\nopagebreak
  \textit{E-mail address}: \texttt{isbeldiev@hse.ru}

}}
\begin{document}

\date{}

\maketitle

\unmarkedfntext{2010 Mathematics Subject Classification. Primary 14F05, 15A63, 19E08; Secondary 20G07. \newline\hspace*{2em}Key words and phrases. Projective space, coherent sheaves, Grothendieck group, Euler form, exceptional basis, group of isometries.}
\begin{abstract} We study the group of isometries of the Grothendieck group $K_0(\PP_n)$ equipped with the standard Euler form $\chi$ defined by $\chi(E, F) = \sum_{\nu}(-1)^\nu\dim \Ext^\nu(E, F)$. We prove several properties of this group, in particular, we show that it is essentially a free abelian group of rank $[\frac{n+1}{2}]$. Also, we compute explicitly its generators for~$n\leqslant 6$.\end{abstract}

\section*{Introduction}\

The Grothendieck group $K_0(\PP_n)$ of coherent sheaves on $\PP_n$ is a free $\ZZ$-module of rank $n+1$ equipped with the Euler form

$$\chi(E, F) = \sum_{\nu}(-1)^\nu\dim \Ext^\nu(E, F).$$

From now on, for a coherent sheaf $E$ we denote by the same letter $E$ its class in~$K_0(\PP_n)$.

\begin{definition}
    A basis $E_0, E_1, \ldots, E_n$ of $K_0(\PP_n)$ over $\ZZ$ is called exceptional if the Gram matrix of the form $\chi$ with respect to this basis is upper unitriangular, i.e. $\chi(E_i,E_j) = 0$ for $i>j$ and $\chi(e_i, e_j) = 1$ for $i = j$.
\end{definition}

A standard example of an exceptional basis of $K_0(\PP_n)$ is $(\OO, \OO(1), \ldots, \OO(n))$. This statement follows from the Beilinson theorem, see \cite{B}; in fact, any $n+1$ consecutive invertible sheaves form an exceptional basis of $K_0(\PP_n)$.

    The braid group $B_{n+1}$ on $n+1$ strands acts on the set of all exceptional bases of the lattice $K_0(\PP_n)$. Namely, if $(E_0, E_1, \ldots, E_n$) is an exceptional basis of $K_0(\PP_n)$, then the inverse generators $g_i$ and $g_i^{-1}$ of $B_{n+1}$ replace the pair $E_i, E_{i+1}$ with $E_{i+1}-\chi(E_i, E_{i+1})E_i, E_i$ and $E_{i+1}, E_i - \chi(E_i, E_{i+1})E_{i+1}$ respectively and preserve all other vectors of the basis. A straightforward computation shows that these formulas agree with the generating relations $g_ig_{i+1}g_i = g_{i+1}g_ig_{i+1}$ for all $i$ and $g_ig_j=g_jg_i$ for $|i-j|>1$.\\

One of the central conjectures related to this action is the following:

\begin{conjecture}
    The group spanned by mutations of exceptional bases and the isometries of $K_0(\PP_n)$ acts transitively on the set of exceptional bases of $K_0(\PP_n)$.
\end{conjecture}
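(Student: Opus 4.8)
The plan is to separate the two kinds of moves and show that the isometries do all of the bookkeeping once the mutations have done the real work. Fix the standard exceptional basis and identify $K_0(\PP_n)$ with $\ZZ^{n+1}$, writing $A$ for the upper unitriangular Gram matrix of $\chi$ in the basis $(\OO, \OO(1), \ldots, \OO(n))$. I record an exceptional basis $B = (b_0, \ldots, b_n)$ as the matrix $M_B \in GL_{n+1}(\ZZ)$ whose columns are the $b_i$, so that its Gram matrix is $G_B = M_B^T A M_B$. An isometry is an element $g \in GL_{n+1}(\ZZ)$ with $g^T A g = A$; it acts by $M_B \mapsto g M_B$ and therefore leaves $G_B$ unchanged, whereas a mutation $g_i$ sends $M_B$ to $M_B S_i$, where $S_i$ is the identity outside the $(i,i+1)$ block and equals $\left(\begin{smallmatrix} -m & 1 \\ 1 & 0 \end{smallmatrix}\right)$ there, with $m = G_{i,i+1} = \chi(b_i, b_{i+1})$.

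The first step is the following converse, which I would prove directly: if two exceptional bases $B, B'$ have $G_B = G_{B'}$, then $g := M_{B'} M_B^{-1}$ lies in $GL_{n+1}(\ZZ)$ and satisfies $g^T A g = (M_B^{-1})^T G_{B'} M_B^{-1} = (M_B^{-1})^T G_B M_B^{-1} = A$, so it is an isometry carrying $B$ to $B'$. Thus two exceptional bases lie in one isometry orbit exactly when they have the same Gram matrix. Because a mutation alters $G_B$ only through $G_B \mapsto S_i^T G_B S_i$ --- a rule depending on $B$ solely through $G_B$ --- the Gram matrix trajectory of any word in mutations and isometries is an honest mutation trajectory of Gram matrices. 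Consequently the conjecture is equivalent to the assertion that the braid group, acting on Gram matrices by $G \mapsto S_i^T G S_i$, can carry the Gram matrix of every exceptional basis to the standard matrix $A$: from a mutation sequence producing $B''$ with $G_{B''} = A = G_{B_0}$, the converse above supplies an isometry taking $B''$ to the standard basis $B_0$.

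What remains is the combinatorial core. I would try to normalize an arbitrary exceptional Gram matrix to $A$ by a descent argument: equip the strictly upper triangular part with a complexity measure (the sum of squares of its entries, or a lexicographic reading of the superdiagonals) and show that whenever $G \neq A$ some mutation strictly decreases it. Two invariants help organize this. First, the twist $E \mapsto E \otimes \OO(1)$ is an isometry, since $\Ext^\nu(E \otimes L, F \otimes L) \cong \Ext^\nu(E, F)$ for any line bundle $L$; combining it with mutations realizes the helix shifts and lets one reduce to normalized, helix-periodic configurations. Second, the conjugacy class of the Coxeter operator $G^{-1} G^T$ is mutation-invariant, since congruence $G \mapsto S_i^T G S_i$ conjugates it by $S_i^{-1}$; this pins down the spectrum and rules out spurious targets. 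For small $n$ the descent terminates and can be checked by a finite computation, in agreement with the explicit generators found for $n \leqslant 6$.

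The hard part is precisely this descent for general $n$. The map $G \mapsto S_i^T G S_i$ can enlarge the entries of $G$ before shrinking them --- the dynamics is of the Markov/Stokes-matrix type familiar from the $\PP_2$ story and from Dubrovin's braid action --- so no obvious quantity is monotone and I know of no a priori bound on the number of mutations required to reach $A$. Proving that the mutation orbit of every exceptional Gram matrix contains $A$ is the lattice-theoretic shadow of the Bondal--Polishchuk transitivity problem, and it is this reachability statement, rather than the isometry bookkeeping settled by the converse above, that I expect to be the genuine obstacle and the reason the result is recorded here only as a conjecture.
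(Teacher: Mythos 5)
The statement you were asked about is Conjecture~1 of the paper: it is \emph{not} proved there. The paper explicitly records it as open, known only for $n=2$ (Gorodentsev--Rudakov) and $n=3$ (Nogin), and the paper's own contribution is merely a structural study of $\Isom(K_0(\PP_n))$ intended as a tool toward it. So there is no ``paper proof'' to match, and your proposal cannot be judged as a proof of the statement --- nor is it one, as you yourself concede in your final paragraph.

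To give your attempt its due: the bookkeeping half is correct and is the standard framework in which the $n=2,3$ cases were actually settled. Your converse --- that two exceptional bases with equal Gram matrices differ by a lattice isometry, via $g = M_{B'}M_B^{-1}$ with $g^TAg = A$ --- is right, and it does reduce the conjecture to transitivity of the braid action $G \mapsto S_i^T G S_i$ on the set of unitriangular matrices realized as Gram matrices of exceptional bases; your observations that mutations act on Gram matrices through $G_B$ alone and that the conjugacy class of $G^{-1}G^T$ (the canonical operator $\varkappa$ of the paper, whose single-Jordan-block structure the paper computes in Lemma~1) is a mutation invariant are both sound. But the genuine gap is exactly the step you flag: the descent. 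Your proposed complexity measure (sum of squares of entries, or lexicographic superdiagonal reading) is not monotone under mutation --- already for $\PP_2$ the Gram data obey the Markov equation and mutations grow entries without bound in most directions, so ``whenever $G \neq A$ some mutation strictly decreases it'' is false as stated; the $n=2$ descent works only because the Markov relation constrains the orbit, and no analogous relation or bound is known for general $n$. Since that reachability statement \emph{is} the conjecture modulo your (correct but routine) isometry reduction, the proposal contains no proof of the hard content, consistent with the paper leaving the statement as Conjecture~1.
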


This conjecture is proved only for $n=2$ (see \cite{GR}) and $n=3$ (see \cite{N}). The arguments presented in \cite{GR, N} are unlikely to be generalized to the case of general $n$. Moreover, the proof for $n=3$ given in \cite{N} seems a little artificial, so it would also be interesting to give another proof of Conjecture 1 even in this particular case. For this purpose, it can be useful to study the group of isometries of $K_0(\PP_n)$ with respect to the Euler form. We denote this group by $\Isom(K_0(\PP_n))$.

In this paper, we prove several results about this group that help to understand its structure more clearly:
\begin{itemize}
    \item in Section 3.1 we prove that any element $\varphi\in \Isom_e(K_0(\PP_n)\otimes \RR)$ can be written as $E\mapsto E\otimes F_\varphi$ for some $F_\varphi \in K_0(\PP_n)\otimes \RR$, where $\Isom_e(K_0(\PP_n)\otimes \RR)$ is the identity component of the group $\Isom(K_0(\PP_n)\otimes \RR)$;
    \item in Section 3.2 we show that $\Isom(K_0(\PP_n))$ contains a subgroup of index 2 (denoted by $\Isom_e(K_0(\PP_n))$) isomorphic to $\ZZ^{[\frac{n+1}{2}]}$;
    \item in Section 3.3 we obtain a full description of $Isom(K_0(\PP_n))$ for $n\leqslant 6$, i.e. we find generators of this group explicitly.
\end{itemize}

\section*{Acknowledgements}\

The author is grateful to his academic supervisor Alexey Gorodentsev for posing the problem and permanent support.

\section{Preliminaries}\

Recall that $(\OO, \OO(1), \ldots, \OO(n))$ is an exceptional basis of $K_0(\PP_n)$. The Gram matrix of the form $\chi$ with respect to this basis is the following:
\[\begin{pmatrix}
    1 & n + 1 & \binom{n+2}{2} & \dots  & \binom{2n-1}{n-1} & \binom{2n}{n}\\
    0 & 1 & n + 1 & \dots  & \binom{2n-2}{n-2} & \binom{2n-1}{n-1} \\
    \vdots & \vdots & \vdots & \ddots & \vdots & \vdots \\
    0 & 0 & 0 & \dots & 1 & n + 1\\
    0 & 0 & 0 & \dots & 0 & 1
\end{pmatrix},\]
since $\chi(\OO(k),\OO(m)) = \binom{n+m-k}{n}$ for all $m,k\in \ZZ$.

Another basis of $K_0(\PP_n)$ which sometimes is more convenient for us to work with is $(\OO_{\PP_n}, \OO_{\PP_{n-1}}, \ldots \OO_{\PP_1}, \OO_{\PP_0})$, where $\OO_Y$ is the restriction of the structure sheaf $\OO$ to a subvariety $Y\subset \PP_n$. The basis $(\OO, \OO(1), \ldots, \OO(n))$ is expressed through $(\OO_{\PP_n}, \OO_{\PP_{n-1}}, \ldots \OO_{\PP_1}, \OO_{\PP_0})$ as follows:

$$\OO = \OO_{\PP_n},$$
$$\OO(1) = \OO_{\PP_n} + \OO_{\PP_{n-1}} + \ldots + \OO_{\PP_0};$$
$$\OO(2) = \OO_{\PP_n} + 2\OO_{\PP_{n-1}} + \ldots + (n+1)\OO_{\PP_0};$$
$$\ldots$$
$$\OO(n) = \OO_{\PP_n} + \binom{n+1}{n}\OO_{\PP_{n-1}} + \ldots + \binom{2n}{n}\OO_{\PP_0}.$$

The lattice $K_0(\PP_n)$ can be canonically identified with the module $$M_n = \{f\in\QQ[t]: f(\ZZ)\subset \ZZ\}\subset\QQ[t]$$ of integer valued polynomials of degree not greater than $n$ with rational coefficients. The identification is done via the isomorphism sending $[E]\in K_0(\PP_n)$ to its Hilbert polynomial~$\chi(E(t))$. Under this map, the basis $(\OO, \OO(1), \ldots, \OO(n))$ is identified with 
$$(\gamma_n(t), \gamma_n(t+1), \ldots, \gamma_n(t+n)),$$
where $\gamma_k(t) = \binom{t+k}{k} = \frac{1}{k!}(t+1)(t+2)\ldots (t+k)$.

We write $D$ for the differential operator $\frac{d}{dt}$ acting on $\QQ[t]$. Since the identification $M_n\cong K_0(\PP_n)$ can be extended to the identification $\QQ[t]_{\leqslant n}\cong K_0(\PP_n)\otimes_{\ZZ}\QQ$, we obtain an operator on $K_0(\PP_n)\otimes_{\ZZ}\QQ$ induced by $D$. This induced operator is also denoted by $D$.

Under the identification $M_n\cong K_0(\PP_n)$, the twisting $T\colon E\mapsto E(m) = E\otimes \OO(m)$ goes to the shift $e^{mD}\colon h_E(t)\mapsto h_E(t+m)$.

It follows from the exact sequence (here $s$ is a linear form vanishing along the hyperplane $\PP_{n-1}$)

$$0\to \OO(-1) \xrightarrow[]{\text{s}} \OO \to \OO\restr{\PP_{n-1}}\to 0$$
that for a locally free sheaf $E$ we have the following exact sequence:

$$0\to E(-1) \xrightarrow[]{\text{s}} E \to E\restr{\PP_{n-1}}\to 0.$$

We conclude that $h_{E\restr{\PP_{n-1}}} = h_E(t) - h_E(t-1) = \nabla h_E(t)$ for all $E\in K_0(\PP_n)$, where $$\nabla = 1 - e^{-D}.$$

In particular, the basis $(\OO_{\PP_n}, \OO_{\PP_{n-1}}, \ldots \OO_{\PP_1}, \OO_{\PP_0})$ goes to the basis 
$$(\gamma_n(t), \gamma_{n-1}(t),\ldots \gamma_0(t))$$
under the canonical identification of $K_0(\PP_n)$ with $M_n$.

Finally, it is worth saying a few words about the multiplicative structure on $M_n$ provided by tensor multiplication on $K_0(\PP_n)$. There is an isomorphism of $\QQ$-algebras $\QQ[[\nabla]]/\nabla^{n+1} = \QQ[[D]]/D^{n+1} \cong M_n\otimes_{\ZZ}\QQ$ given by $\psi(D) \mapsto \psi(D)\gamma_n$. The preimage of $M_n$ under this isomorphism is the lattice $\ZZ[[\nabla]]/\nabla^{n+1}$. If we identify this lattice with $K_0(\PP_n)$, the tensor product of classes of sheaves in $K_0(\PP_n)$ turns to the standard multiplication in $\ZZ[[\nabla]]/\nabla^{n+1}$. However, in terms of $M_n$ itself this multiplicative structure is much less clear.

\section{Main results}

\subsection{Isometries of $K_0(\PP_n)\otimes \RR$}\

We begin with a proposition describing the group of isometries of $K_0(\PP_n)\otimes \RR$.

\begin{proposition}
    The group $\Isom(K_0(\PP_n)\otimes \RR)$ as a subgroup of $GL(n+1,\RR)$ is abelian and has two connected components. The component of the identity $\Isom_e(K_0(\PP_n)\otimes \RR)$ is isomorphic to $\RR^{[\frac{n+1}{2}]}$. If we identify $K_0(\PP_n)\otimes_{\ZZ}\RR$ with $\RR[t]_{\leqslant n}$, then $$(\exp(D), \exp(D^3), \ldots, \exp(D^{2[\frac{n+1}{2}]-1}))$$ is a basis of $Isom_e(K_0(\PP_n)\otimes \RR)$.
\end{proposition}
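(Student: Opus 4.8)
The plan is to work entirely inside the commutative ring $R=\RR[[D]]/D^{n+1}\cong K_0(\PP_n)\otimes\RR$ introduced in the Preliminaries, where $\OO(k)$ corresponds to $e^{kD}$ and multiplication by $u\in R$ is exactly the differential operator $u(D)$ acting on $\RR[t]_{\leqslant n}$. First I would record the Euler form ring-theoretically. Since $\chi(\OO(k),\OO(m))=\gamma_n(m-k)$, bilinearity forces $\chi(a,b)=\phi(\sigma(a)\,b)$, where $\sigma\colon D\mapsto -D$ is the ring involution and $\phi$ is the linear functional determined by $\phi(e^{xD})=\gamma_n(x)$, i.e. $\phi(D^j)=j!\,[x^j]\gamma_n(x)$. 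The Gram matrix of $\chi$ is unitriangular, hence invertible, and as $\sigma$ is invertible the symmetric pairing $(x,c)\mapsto\phi(xc)$ is nondegenerate; I will use this below.

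The crux is to show that every isometry is a multiplication operator. Writing $\chi(a,b)=a^{T}Gb$, a short matrix manipulation shows that any $A$ satisfying $A^{T}GA=G$ commutes with the Coxeter transformation $W:=G^{-T}G$. By Serre duality $\chi(a,b)=(-1)^{n}\chi\bigl(b,\,e^{-(n+1)D}a\bigr)$ (equivalently, from the elementary identity $\gamma_n(-x-n-1)=(-1)^{n}\gamma_n(x)$, which follows from the binomial formula for $\gamma_n$), and rewriting this in matrix form yields $W=(-1)^{n}M_{e^{(n+1)D}}$, that is, $(-1)^{n}$ times the twist by $\OO(n+1)$. Since $D$ is a single nilpotent Jordan block on $\RR[t]_{\leqslant n}$, the operator $W$ is regular (its minimal and characteristic polynomials coincide), so its centralizer in $\mathrm{End}(R)$ is precisely $\RR[W]=\{M_u:u\in R\}$, the algebra of multiplication operators $M_u\colon a\mapsto ua$. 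Hence every isometry has the form $M_u\colon E\mapsto E\otimes u$; in particular $\Isom(K_0(\PP_n)\otimes\RR)$ is abelian, which simultaneously gives the promised description $\varphi\colon E\mapsto E\otimes F_\varphi$.

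It remains to single out the isometric multiplications and describe the group. From $\chi(M_ua,M_ub)=\phi(\sigma(u)u\cdot\sigma(a)b)$ together with the nondegeneracy of $(x,c)\mapsto\phi(xc)$, I obtain that $M_u$ is an isometry if and only if $u\,\sigma(u)=1$. Writing a unit as $u=u_0\exp(w)$ with $u_0\in\RR^{\times}$ and $w\in D\cdot R$, the condition $u\sigma(u)=1$ becomes $u_0^{2}=1$ and $w+\sigma(w)=0$, i.e. $u_0=\pm1$ and $w$ odd in $D$, so $w\in\langle D,D^{3},\ldots\rangle$. The two possible signs $u_0=\pm1$ give the two connected components, the nontrivial one being $-\mathrm{Id}$ times the identity component, while the identity component equals $\{\exp(w):\sigma(w)=-w\}$. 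The odd monomials $D,D^{3},\ldots,D^{2[\frac{n+1}{2}]-1}$ form a basis of the space of odd elements, of dimension $[\frac{n+1}{2}]$; since $R$ is commutative, $w\mapsto M_{\exp w}$ is an isomorphism of the additive group of odd elements onto the identity component, carrying this basis to $(\exp(D),\exp(D^{3}),\ldots,\exp(D^{2[\frac{n+1}{2}]-1}))$, which is therefore a basis of $\Isom_e(K_0(\PP_n)\otimes\RR)\cong\RR^{[\frac{n+1}{2}]}$.

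I expect the main obstacle to be the middle step: recognizing via Serre duality that the Coxeter transformation is $(-1)^{n}$ times a twist and is therefore regular, so that commuting with it forces every isometry into the algebra of multiplication operators. Once this structural reduction is in place, the remaining verification that the norm-one units form $\{\pm1\}\times\RR^{[\frac{n+1}{2}]}$ is routine.
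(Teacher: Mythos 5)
Your argument is correct, and it takes a partly different route from the paper's. The shared core is exactly the middle step you flag: both proofs use Serre duality (equivalently the identity $\gamma_n(-x-n-1)=(-1)^n\gamma_n(x)$) to compute the canonical operator $\varkappa=G^{-1}G^{T}=(-1)^n e^{-(n+1)D}$ (your $W$ is its inverse), observe that it is a single Jordan block, and conclude that anything commuting with it lies in the $(n+1)$-dimensional algebra of polynomials in $D$, i.e.\ of multiplication operators. Around this pivot you diverge. The paper argues Lie-theoretically: it identifies $\mathrm{Lie}(\Isom)$ with the antiselfadjoint operators (its Proposition 3, via Serre's book), needs a separate nilpotency argument to prove $D^{\vee}=-D$ (the adjoint of $D$ is some $F(D)$ with zero constant term, and $\varkappa^{\vee}=\varkappa^{-1}$ forces $D+F(D)=0$), deduces that the antiselfadjoint operators are exactly the odd polynomials in $D$, exponentiates to get the identity component, and only then counts components by solving $f(-D)f(D)=1$ recursively in the coefficients. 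You instead work at the group level throughout: encoding $\chi(a,b)=\phi(\sigma(a)b)$ turns the isometry condition for $M_u$ into the norm equation $u\,\sigma(u)=1$ --- the same equation the paper reaches only at the very end --- which you solve in closed form via the unit factorization $u=\pm\exp(w)$, obtaining the sign (hence both components) and the oddness of $w$ in one stroke, with no Lie algebra and no adjoint-of-$D$ computation. A further payoff of your reduction is that it proves the paper's Proposition 4 ($\varphi(E)=E\otimes F_\varphi$) immediately, and in the stronger form that \emph{every} isometry, not only those in the identity component, is a tensor multiplication, whereas the paper proves Proposition 4 separately by an explicit computation with its Lemmas 2 and 3 (e.g.\ $-\mathrm{Id}=M_{-1}$ is tensoring by $-\OO$). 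What the paper's route buys in exchange is a general framework (reflexive and antiselfadjoint operators for an arbitrary nondegenerate form) rather than something specific to this ring. The only steps you should write out in full are routine: the dimension count showing the centralizer of $W$ equals $\{M_u:u\in R\}$ (both sides are $(n+1)$-dimensional and $W$ is itself a multiplication operator, so $\RR[W]\subseteq\{M_u\}$ with equality), and the remark that the products $\sigma(a)b$ span $R$, so nondegeneracy of $(x,c)\mapsto\phi(xc)$ legitimately upgrades ``$M_u$ is an isometry'' to $u\,\sigma(u)=1$.
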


A proof of Proposition 1 can be found in \cite{G}. In order to make the paper self-contained, we sketch this proof below.

Let $V$ be a finite dimensional vector space equipped with a nondegenerate bilinear form $\beta$. Then there exists a unique linear operator $\varkappa$ such that $\beta(u,v) = \beta(v, \varkappa u)$ for any $u,v\in V$. If $G$ is the Gram matrix of the form $\beta$ in some basis of the vector space $V$, then the matrix of $\varkappa$ is given by $G^{-1}G^T$. The operator $\varkappa$ is called the canonical operator of the form $\beta$.

Recall that given a linear operator $\varphi\colon V\to V$, its left (respectively, right) adjoint is the linear operator on $V$ denoted by $^{\vee}\varphi$ (respectively, $\varphi^{\vee}$) and uniquely determined by the formula $\beta(^{\vee}\varphi u, v) = \beta(u, \varphi v)$ (respectively, $\beta(u, \varphi^{\vee}v) = \beta(\varphi u, v)$) for any $u,v\in V$. In general $^{\vee}\varphi \ne \varphi^{\vee}$. However, a direct calculation shows that the following proposition holds.

\begin{proposition}
    In the previous notation, the following conditions are equivalent:\\
    $$(1)\quad ^{\vee}\varphi = \varphi^{\vee},\quad (2)\quad ^{\vee \vee}\varphi = \varphi, \quad (3)\quad \varphi^{\vee\vee} = \varphi, \quad (4)\quad \varkappa\varphi = \varphi \varkappa.$$
\end{proposition}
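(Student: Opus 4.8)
The plan is to translate everything into matrix form in a fixed basis and then read off the four conditions as commutation statements with $\varkappa$. Fix a basis in which $\beta$ has Gram matrix $G$, so that $\beta(u,v) = u^{T}Gv$, and let $A$ denote the matrix of $\varphi$. Working directly from the defining formulas, I would first record the three basic matrix identities
$$\varkappa = G^{-1}G^{T}, \qquad {}^{\vee}\varphi = (G^{T})^{-1}A^{T}G^{T}, \qquad \varphi^{\vee} = G^{-1}A^{T}G.$$
For instance, $\beta({}^{\vee}\varphi\,u, v) = \beta(u,\varphi v)$ becomes $L^{T}G = GA$ for the matrix $L$ of ${}^{\vee}\varphi$, whence $L = (G^{T})^{-1}A^{T}G^{T}$.

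The next step is to iterate these formulas. A short computation (the only place transposes must be tracked carefully) gives
$$ {}^{\vee\vee}\varphi = \varkappa^{-1}A\varkappa, \qquad \varphi^{\vee\vee} = \varkappa A\varkappa^{-1}, \qquad \varphi^{\vee} = \varkappa\,({}^{\vee}\varphi)\,\varkappa^{-1}. $$
From the first identity, condition $(2)$, i.e. ${}^{\vee\vee}\varphi = \varphi$, is equivalent to $\varkappa^{-1}A\varkappa = A$, that is, to $A\varkappa = \varkappa A$, which is exactly $(4)$; similarly the second identity yields $(3)\iff(4)$. Thus $(2)$, $(3)$, $(4)$ become equivalent at once.

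It remains to fold $(1)$ into this chain, and here the third identity $\varphi^{\vee} = \varkappa\,({}^{\vee}\varphi)\,\varkappa^{-1}$ does the work: condition $(1)$, i.e. ${}^{\vee}\varphi = \varphi^{\vee}$, is equivalent to ${}^{\vee}\varphi$ commuting with $\varkappa$. To relate this to $\varphi$ itself commuting with $\varkappa$, I would use the elementary facts that the two adjunctions reverse composition, ${}^{\vee}(\varphi\psi) = {}^{\vee}\psi\,{}^{\vee}\varphi$ and $(\varphi\psi)^{\vee} = \psi^{\vee}\varphi^{\vee}$, are mutually inverse operations, $({}^{\vee}\varphi)^{\vee} = \varphi$, and send the canonical operator to its inverse, ${}^{\vee}\varkappa = \varkappa^{\vee} = \varkappa^{-1}$ (all immediate from the matrix formulas above). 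Applying ${}^{\vee}(-)$ to the relation $\varkappa\varphi = \varphi\varkappa$ then gives ${}^{\vee}\varphi\,\varkappa^{-1} = \varkappa^{-1}\,{}^{\vee}\varphi$, so $\varphi$ commutes with $\varkappa$ if and only if ${}^{\vee}\varphi$ does; combined with the previous observation this yields $(1)\iff(4)$ and closes the equivalence.

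The main obstacle is nothing conceptual but the bookkeeping in the second step: deriving the intertwining relation $\varphi^{\vee} = \varkappa\,({}^{\vee}\varphi)\,\varkappa^{-1}$ and the two squared-adjoint formulas requires handling several nested transposes and inverses of $G$ without sign or order errors. Once these identities are in hand, every one of $(1)$--$(4)$ reduces to the single statement that $A$ commutes with $\varkappa$, so the logical structure is entirely straightforward.
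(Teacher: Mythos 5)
Your proposal is correct: the matrix identities $\varkappa = G^{-1}G^{T}$, ${}^{\vee}\varphi = (G^{T})^{-1}A^{T}G^{T}$, $\varphi^{\vee} = G^{-1}A^{T}G$ and the derived relations ${}^{\vee\vee}\varphi = \varkappa^{-1}A\varkappa$, $\varphi^{\vee\vee} = \varkappa A\varkappa^{-1}$, $\varphi^{\vee} = \varkappa({}^{\vee}\varphi)\varkappa^{-1}$ all check out, and they reduce each of $(1)$--$(4)$ to the commutation $A\varkappa = \varkappa A$. This is essentially the paper's approach: the paper leaves Proposition 2 as ``a direct calculation'' based on the same Gram-matrix formulas, and your argument is precisely that calculation carried out in full.
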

    

An operator $\varphi$ satisfying any of the equivalent conditions from Proposition 2 is called reflexive. We denote by $\mathcal A$ the algebra of reflexive operators.

\begin{definition}
    A linear operator $\varphi$ is called antiselfadjoint if $^{\vee}\varphi = \varphi^{\vee} = -\varphi$.
\end{definition}

Clearly, all antiselfadjoint operators are reflexive. The subspace of $\mathcal A$ consisting of antiselfadjoint operators will be denoted by $\mathcal A^-$.

The next proposition describes the Lie algebra of the algebraic group of isometries of~$(V,\beta)$.

\begin{proposition}
    The Lie algebra $\textnormal{Lie}(\Isom)$ of the group of isometries of $(V,\beta)$ is naturally isomorphic to $\mathcal A^-$. 
\end{proposition}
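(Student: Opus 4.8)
The plan is to identify $\Isom$ with the group of invertible isometric operators on $(V,\beta)$ and compute its Lie algebra as the tangent space at the identity. An operator $\varphi$ is an isometry precisely when $\beta(\varphi u, \varphi v) = \beta(u,v)$ for all $u,v \in V$. First I would rewrite this condition using adjoints: since $\beta(\varphi u, \varphi v) = \beta(u, \varphi^{\vee}\varphi v)$, the isometry condition is equivalent to the operator identity $\varphi^{\vee}\varphi = \mathrm{id}$. This exhibits $\Isom$ as the zero locus of the polynomial map $\varphi \mapsto \varphi^{\vee}\varphi - \mathrm{id}$, which makes it an algebraic group whose Lie algebra I can obtain by differentiating this defining relation.

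Next I would compute the tangent space at the identity. Consider a smooth path $\varphi_s$ of isometries with $\varphi_0 = \mathrm{id}$ and write $\xi = \frac{d}{ds}\big|_{s=0}\varphi_s$ for the velocity. Differentiating the relation $\varphi_s^{\vee}\varphi_s = \mathrm{id}$ at $s=0$ yields $\xi^{\vee} + \xi = 0$, i.e. $\xi^{\vee} = -\xi$. So the tangent space is contained in the space of operators satisfying $\xi^{\vee} = -\xi$. The key point I must address is that differentiating $\varphi \mapsto \varphi^{\vee}$ gives $\xi \mapsto \xi^{\vee}$: this follows because the assignment of the right adjoint is $\RR$-linear in the operator (it is given in coordinates by $\xi \mapsto G^{-1}\xi^T G$ for the Gram matrix $G$), hence equals its own derivative.

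The remaining work is to check that any $\xi$ with $\xi^{\vee} = -\xi$ is \emph{antiselfadjoint} in the sense of Definition 3, i.e. that it also satisfies $^{\vee}\xi = -\xi$, and thereby lands in $\mathcal A^-$. This is where the two-sided nature of the condition enters: from $\xi^{\vee} = -\xi$ I take the right adjoint of both sides and use that $^{\vee}(\xi^{\vee}) = \xi$ (equivalently $(^{\vee}\xi)^{\vee} = \xi$), which holds by the defining formulas for the adjoints. Applying the left adjoint to $\xi^{\vee} = -\xi$ gives $^{\vee}(\xi^{\vee}) = -\,{}^{\vee}\xi$, and since $^{\vee}(\xi^{\vee}) = \xi$ this reads $\xi = -\,{}^{\vee}\xi$, i.e. $^{\vee}\xi = -\xi$. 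Thus $\xi^{\vee} = {}^{\vee}\xi = -\xi$, so $\xi \in \mathcal A^-$, and conversely every element of $\mathcal A^-$ visibly satisfies the differentiated relation $\xi^{\vee} = -\xi$. To turn the inclusion of tangent spaces into an isomorphism, I would note that $\xi \mapsto \exp(\xi)$ maps $\mathcal A^-$ into $\Isom$ (an antiselfadjoint $\xi$ exponentiates to an isometry because $\exp(\xi)^{\vee} = \exp(\xi^{\vee}) = \exp(-\xi) = \exp(\xi)^{-1}$), which shows $\mathcal A^-$ has the right dimension and is exactly the Lie algebra.

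I expect the main obstacle to be the careful bookkeeping of left versus right adjoints, since the two coincide only on reflexive operators and one must verify that $\mathcal A^-$ is closed under the Lie bracket (so that it genuinely is a Lie algebra and not merely a tangent subspace); for the bracket closure I would use $\varkappa$-commutativity from Proposition 2 together with the fact that the commutator of two antiselfadjoint reflexive operators is again antiselfadjoint, which is a short direct computation with the adjoint relations.
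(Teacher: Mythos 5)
Your proof is correct, and it supplies an argument that the paper itself omits: the paper establishes Proposition 3 only by citing Serre (Chapter 1, Theorem 5 of \cite{S}) and remarking that it generalizes the identification of the Lie algebra of $O(n)$ with skew-symmetric matrices. Your route --- rewrite the isometry condition as $\varphi^{\vee}\varphi = \mathrm{id}$ via nondegeneracy of $\beta$, differentiate a curve through the identity to obtain $\xi^{\vee} = -\xi$, and exponentiate $\mathcal A^-$ back into $\Isom$ for the reverse inclusion --- is the standard argument that citation encodes, so in substance you have reconstructed the intended proof rather than found a different one. The step genuinely worth spelling out, and which you handle correctly, is that the one-sided linearized condition $\xi^{\vee} = -\xi$ already forces the two-sided condition $^{\vee}\xi = -\xi$ required by Definition 3: this follows because $\varphi \mapsto {}^{\vee}\varphi$ and $\varphi \mapsto \varphi^{\vee}$ are mutually inverse operations (from $\beta({}^{\vee}(\varphi^{\vee})u, v) = \beta(u, \varphi^{\vee}v) = \beta(\varphi u, v)$ and nondegeneracy), and it is needed because differentiation a priori produces only the right-adjoint condition while $\mathcal A^-$ is defined by both. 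One slip of wording: you announce that you ``take the right adjoint of both sides'' of $\xi^{\vee} = -\xi$, but the computation you then perform (correctly) applies the left adjoint; only the computation matters. Your closing checks are also sound: the anti-homomorphism property $(\varphi\psi)^{\vee} = \psi^{\vee}\varphi^{\vee}$ gives both $(\exp \xi)^{\vee} = \exp(\xi^{\vee})$ and closure of $\mathcal A^-$ under the commutator, since $[\xi,\eta]^{\vee} = \eta^{\vee}\xi^{\vee} - \xi^{\vee}\eta^{\vee} = \eta\xi - \xi\eta = -[\xi,\eta]$ (no appeal to $\varkappa$-commutativity is actually needed for this), and the exponential argument makes the tangent-space identification two-sided without your having to justify smoothness of the zero locus of $\varphi \mapsto \varphi^{\vee}\varphi - \mathrm{id}$, which your ``algebraic group'' phrasing would otherwise leave as a gap.
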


Proposition 3 can be proved via the arguments given in \cite{S} (Chapter 1, Theorem 5). Note that this proposition generalizes the well-known result saying that the Lie algebra of the orthogonal group $O(n)$ is the space of skew-symmetric matrices of size $n$.

Now, let us pass to the situation when $V = K_0(\PP_n)\otimes \RR$ and $\beta$ is the Euler form $\chi$. We want to show that $D$ is an antiselfadjoint operator. For this purpose, we need the following lemma.

\begin{lemma}
    The canonical operator of $\chi$ is $\varkappa = (-1)^{n}e^{-(n+1)D}\colon f(t) \mapsto (-1)^{n}f(t-n-1).$
\end{lemma}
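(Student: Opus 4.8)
The plan is to derive the canonical operator from Serre duality rather than by inverting the Gram matrix directly. Recall that the defining property of $\varkappa$ is the identity $\chi(u,v) = \chi(v, \varkappa u)$ for all $u,v$, and that $\varkappa$ is \emph{unique} because $\chi$ is nondegenerate (its Gram matrix is unitriangular, hence of determinant $1$). So it suffices to exhibit one operator satisfying this relation and to identify it with $(-1)^n e^{-(n+1)D}$.

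First I would invoke Serre duality on $\PP_n$. Since the canonical sheaf is $\omega = \OO(-n-1)$, for coherent sheaves one has a natural isomorphism $\Ext^\nu(E, F) \cong \Ext^{n-\nu}(F, E(-n-1))^*$. Taking alternating sums of dimensions and reindexing $\mu = n-\nu$ yields the functional equation
$$\chi(E, F) = (-1)^n \chi(F, E(-n-1))$$
at the level of classes in $K_0(\PP_n)$, and hence on all of $K_0(\PP_n)\otimes\RR$ by bilinearity.

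Next I would translate the twist $E \mapsto E(-n-1)$ into the polynomial picture. As recorded in the preliminaries, twisting by $\OO(m)$ corresponds to the shift $e^{mD}\colon f(t)\mapsto f(t+m)$; in particular $E(-n-1)$ corresponds to $e^{-(n+1)D}$ applied to the Hilbert polynomial. Thus the functional equation reads $\chi(u, v) = \chi\bigl(v, (-1)^n e^{-(n+1)D} u\bigr)$ for all $u,v$. Comparing this with the defining relation of $\varkappa$ and invoking uniqueness gives $\varkappa = (-1)^n e^{-(n+1)D}$, which acts as $f(t)\mapsto (-1)^n f(t-n-1)$.

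The argument is essentially formal, so I do not expect a serious obstacle; the only point requiring care is the bookkeeping of the order of arguments and the sign, since Serre duality relates $\chi(E,F)$ to $\chi(F, E(-n-1))$ with the twist on the \emph{first} slot, whereas $\varkappa$ is defined through the second. As an independent check (and an alternative route avoiding Serre duality), one can verify the functional equation directly on the basis $(\OO, \ldots, \OO(n))$ using $\chi(\OO(k),\OO(m)) = \binom{n+m-k}{n}$: the claimed identity $\binom{n+m-k}{n} = (-1)^n \binom{k-1-m}{n}$ is exactly the upper-negation binomial identity $\binom{r}{n} = (-1)^n\binom{n-r-1}{n}$ with $r = n+m-k$.
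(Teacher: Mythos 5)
Your proposal is correct and follows essentially the same route as the paper: it derives $\chi(E,F) = (-1)^n\chi(F, E(-n-1))$ from Serre duality, translates the twist into the shift $e^{-(n+1)D}$, and concludes by uniqueness of the canonical operator. Even your independent check via $\chi(\OO(k),\OO(m)) = \binom{n+m-k}{n}$ and upper negation is exactly the paper's second, computational argument.
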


\begin{proof}
    It follows from the Serre duality:
    $$\Ext^q(E,F) \cong \Ext^{n-q}(F, E(-n-1))^*$$
    that $\chi(E,F) = (-1)^n\chi(F, E\otimes \omega)$. This means that the canonical operator $\varkappa$ is $$(-1)^ne^{-(n+1)D}\colon f(t) \mapsto (-1)^nf(t-n-1).$$
    
    In other way this follows from the straightforward computation for the standard basis $\OO, \OO(1), \ldots \OO(n)$ of $K_0(\PP_n)$:
    
    $$\chi(\OO(m), \OO(k-n-1)) = \binom{n + k - n -1 - m}{n}= $$ $$ = \binom{k-m-1}{n}=\frac{(k-m-1)(k-m-2)\ldots(k-m-n)}{n!} = $$
    $$= (-1)^{n}\frac{(m-k+1)(m-k+2)\ldots(m-k+n)}{n!} = (-1)^{n}\binom{n+m-k}{n} =  $$ $$= (-1)^n\chi(\OO(k), \OO(m)),$$
    so we are done. 
\end{proof}

So, $\varkappa = (-1)^n Id + \eta$, where $\eta = (-1)^n(e^{-(n+1)D} - 1)$. It is easy to see that $\eta^{n+1} = 0$ but $\eta^n \ne 0$. So, the Jordan normal form of $\varkappa$ consists of only one block with eigenvalue~$(-1)^n$. Therefore, any operator commuting with $\varkappa$ is a polynomial in $\varkappa$, hence it is a polynomial in $D$.

Now, $D$ commutes with $\varkappa$, so $^{\vee}D = D^{\vee} = F(D)$, where $F\in \RR[x]$ is some polynomial. Since $D$ is nilpotent, $F(D)$ must also be nilpotent, i.e. the constant term of $F$ is $0$. It follows that $\varkappa^\vee = e^{-(n+1)F(D)}$. At the same time, $\varkappa\in \Isom(K_0(\PP_n))$, because $$\chi(E,F) = \chi(F, E\otimes \omega) = \chi(E\otimes \omega, F\otimes \omega).$$
Hence $^\vee\varkappa=\varkappa^\vee = \varkappa^{-1}$. This means that $e^{(n+1)F(D)} = e^{-(n+1)D}$, which can be rewritten as~$e^{-(n+1)(D+F(D))} = Id$. But $ D+ F(D)$ is nilpotent, so we must have $D+F(D)=0$ and $D^\vee = -D$.

Any antiselfadjoint operator commutes with $\varkappa$, so, again, it must be a polynomial in $\varkappa$. From this and from the fact that $D^\vee = -D$ we conclude that any antiselfadjoint operator is a linear combination of odd powers of the operator $D$. By exponentiating we conclude that the group $\Isom_e(K_0(\PP_n))$ is isomorphic to $\RR^{[\frac{n+1}{2}]}$ and $\exp(D), \exp(D^3), \ldots, \exp(D^{2[\frac{n+1}{2}]-1})$ is a basis of $\Isom_e(K_0(\PP_n\otimes \RR))$.

To finish the proof of Proposition 1, it remains to show that the group $\Isom(K_0(\PP_n))$ has two connected components. In order to do this, notice that $\Isom(K_0(\PP_n\otimes \RR))$ can be viewed as the subspace of the canonical algebra $\mathcal A$ given by $f(-D)f(D) = 1$, where $f(D) = a_0 + a_1D + a_2D^2 + \ldots + a_nD^n$, $a_i\in\RR$. The condition $f(-D)f(D) = 1$ can be rewritten explicitly as the following system:

\begin{equation*}
    \begin{cases}
      a_0^2 = 1,\\
      2a_0a_2 = a_1^2,\\
      2a_0a_4 = -2a_1a_3 - a_2^2,\\
      \ldots\\
    \end{cases}\,
\end{equation*}

We see that if we choose $a_0 = \pm 1$ and arbitrary values for odd coefficients $a_i$, then the remaining even coefficients are determined uniquely. So, the group $\Isom(K_0(\PP_n))$ has two connected components corresponding to the two possible choices of $a_0$. This ends the proof of Proposition 1.\\

So, any element of $\Isom_e(K_0(\PP_n)\otimes \RR)$ can be written as $\exp(a_1D + a_3D^3 + \ldots)$. This observation points out a way to find the elements of $\Isom_e(K_0(\PP_n))$, namely we need to choose $a_1, a_3, \ldots$ in such a way that $\exp(a_1D + a_3D^3 + \ldots)$ maps the lattice $K_0(\PP_n)$ bijectively to itself. We will work with this in the next chapter. Now, we state another proposition which allows us to express isometries of $K_0(\PP_n)$ and $K_0(\PP_n)\otimes \RR$ as tensor multiplication by some classes from $K_0(\PP_n)$.

\begin{proposition}
    Any element $\varphi \in \Isom_e(K_0(\PP_n)\otimes \RR)$ can be written as $E\mapsto E\otimes F_\varphi$ for some $F_\varphi \in K_0(\PP_n)\otimes \RR$.
\end{proposition}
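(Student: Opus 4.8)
The plan is to exploit the two structures on $M_n\otimes\RR$ set up in the preliminaries: the differential operator $D$, and the algebra isomorphism $\RR[[D]]/D^{n+1}\cong M_n\otimes\RR$, $\psi(D)\mapsto\psi(D)\gamma_n$, under which the tensor product of sheaf classes becomes ordinary multiplication and $\OO=\gamma_n$ is the unit. Recall from the discussion following Proposition 1 that every $\varphi\in\Isom_e(K_0(\PP_n)\otimes\RR)$ has the form $\varphi=\exp(a_1D+a_3D^3+\cdots)$. Since $D$ is nilpotent ($D^{n+1}=0$) and the exponent has zero constant term, this exponential is a polynomial in $D$ with constant term $1$; so I would write $\varphi=g(D)$ with $g\in\RR[x]$ and $g(0)=1$. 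The candidate for $F_\varphi$ is then forced: taking $E=\OO=\gamma_n$ shows that $F_\varphi$ must equal $\varphi(\OO)=g(D)\gamma_n$, and this element automatically lies in $K_0(\PP_n)\otimes\RR$, as required.

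The heart of the argument is to verify that tensoring by this element reproduces $\varphi$. I would write an arbitrary class as $E=\phi_E(D)\gamma_n$ for a unique polynomial $\phi_E$. By the definition of the algebra structure, $E\otimes F_\varphi=\phi_E(D)\,g(D)\,\gamma_n$, while applying $\varphi$ as a differential operator gives $\varphi(E)=g(D)\bigl(\phi_E(D)\gamma_n\bigr)=g(D)\,\phi_E(D)\,\gamma_n$. Since $g(D)$ and $\phi_E(D)$ are both polynomials in the single operator $D$, they commute, so the two expressions coincide and $\varphi(E)=E\otimes F_\varphi$ for every $E$.

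The only point requiring care — the step I expect to be the main obstacle to state cleanly — is the identification of the operator ``apply the differential operator $g(D)$'' with the operator ``multiply by $g(D)$ in the algebra $\RR[[D]]/D^{n+1}$''. This is exactly the content of the isomorphism $\psi(D)\mapsto\psi(D)\gamma_n$: multiplication by $g(D)$ in the abstract algebra sends $\psi(D)\gamma_n$ to $g(D)\psi(D)\gamma_n$, which is literally the differential operator $g(D)$ applied to the polynomial $\psi(D)\gamma_n$. Once this dictionary is invoked, the proposition reduces to the commutativity of polynomials in a single operator, and the conclusion $F_\varphi=\varphi(\OO)$ follows immediately.
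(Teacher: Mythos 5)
Your proof is correct, but it takes a genuinely different route from the paper's. The paper argues by direct computation: Lemma 2 gives $D(\OO_{\PP_k}) = \OO_{\PP_{k-1}} + \frac{1}{2}\OO_{\PP_{k-2}} + \ldots + \frac{1}{k}\OO_{\PP_0}$ via $D = \nabla + \frac{\nabla^2}{2} + \frac{\nabla^3}{3} + \ldots$, Lemma 3 expands $\OO(k)\otimes\OO_{\PP_m}$ in the basis $(\OO_{\PP_i})$ using a binomial identity, and the proof then verifies coefficient by coefficient that $D(\OO(k)) = \OO(k)\otimes\bigl(\OO_{\PP_{n-1}} + \frac{1}{2}\OO_{\PP_{n-2}} + \ldots + \frac{1}{n}\OO_{\PP_0}\bigr)$, extending to general $\varphi = \exp(a_1D + a_3D^3 + \ldots)$ by tensor powers and linearity. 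You bypass all of this by invoking the algebra isomorphism $\RR[[D]]/D^{n+1}\cong M_n\otimes\RR$, $\psi(D)\mapsto\psi(D)\gamma_n$, stated in the preliminaries, under which tensor product becomes ordinary multiplication and $\OO = \gamma_n$ is the unit: any $\varphi = g(D)$ acting as a differential operator is then literally multiplication in the algebra by the element $g(D)$, i.e.\ tensoring by $F_\varphi = \varphi(\OO) = g(D)\gamma_n$, and commutativity of polynomials in $D$ closes the argument. Your version is shorter and more conceptual --- it identifies the differential-operator action of $\RR[D]$ with the regular representation of the ring on itself, and it produces the formula $F_\varphi = \varphi(\OO)$ immediately (the paper only extracts this later, inside the proof of Proposition 7). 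The trade-off is that the multiplicativity claim you rely on is asserted in the preliminaries without proof (it is the standard ring presentation $K_0(\PP_n)\cong\ZZ[[\nabla]]/\nabla^{n+1}$ with $\nabla$ the hyperplane class), whereas the paper's proof via Lemmas 2 and 3 is self-contained and in effect verifies, by hand, precisely the instance of that claim which the proposition requires; if you wanted your argument to stand at the paper's level of rigor, you would need to supply a proof or reference for that isomorphism of rings.
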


Our proof of Proposition 4 is based on two lemmas.

\begin{lemma}
    $D(\OO_{\PP_k}) = \OO_{\PP_{k-1}} + \frac{1}{2}\OO_{\PP_{k-2}} + \frac{1}{3}\OO_{\PP_{k-3}} + \ldots + \frac{1}{k}\OO_{\PP_0}$ for any $k=0,1,
    \ldots, n$.
\end{lemma}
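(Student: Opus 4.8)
The plan is to pass to the polynomial model $M_n\otimes\QQ\cong\QQ[t]_{\leqslant n}$ introduced in Section 1, under which $\OO_{\PP_k}$ corresponds to $\gamma_k(t)=\binom{t+k}{k}$ and $D$ is ordinary differentiation. The assertion then becomes the purely algebraic polynomial identity
$$D\gamma_k = \sum_{j=1}^{k}\frac{1}{j}\,\gamma_{k-j},$$
and my strategy is to read it off from the nilpotent operator $\nabla=1-e^{-D}$ already in play, rather than differentiating $\gamma_k$ by hand.

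The first key step is the lowering relation $\nabla\gamma_k=\gamma_{k-1}$. This is immediate from Pascal's rule: since $\nabla f(t)=f(t)-f(t-1)$, we get
$$\nabla\gamma_k(t)=\binom{t+k}{k}-\binom{t+k-1}{k}=\binom{t+k-1}{k-1}=\gamma_{k-1}(t),$$
with the convention that $\nabla$ annihilates the constant $\gamma_0=1$. Iterating, $\nabla^{\,j}\gamma_k=\gamma_{k-j}$ for $0\leqslant j\leqslant k$ and $\nabla^{\,j}\gamma_k=0$ for $j>k$. (This also matches the geometric fact, recorded in Section 1, that the hyperplane restriction operator $\nabla$ sends $\OO_{\PP_k}$ to $\OO_{\PP_{k-1}}$.)

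The second step is to invert the defining relation $\nabla=1-e^{-D}$. Solving for $D$ gives $e^{-D}=1-\nabla$, hence $D=-\log(1-\nabla)=\sum_{j\geqslant 1}\tfrac{1}{j}\nabla^{\,j}$, and this series is in fact a finite sum of operators on $M_n\otimes\QQ$ because $\nabla^{\,n+1}=0$, so no convergence issue arises. Applying this expansion to $\gamma_k$ and using the lowering relation from the previous step yields
$$D\gamma_k=\sum_{j\geqslant 1}\frac{1}{j}\nabla^{\,j}\gamma_k=\sum_{j=1}^{k}\frac{1}{j}\,\gamma_{k-j},$$
which is exactly the claimed formula after translating $\gamma_{k-j}$ back to $\OO_{\PP_{k-j}}$. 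I do not expect any genuine obstacle here: the only points requiring care are the justification that $D=-\log(1-\nabla)$ makes sense as a polynomial in the nilpotent $\nabla$ and the bookkeeping of where the sum terminates, both of which are routine. As an independent check one can instead use the generating function $\sum_{k\geqslant 0}\gamma_k(t)\,x^k=(1-x)^{-(t+1)}$, whose $t$-derivative equals $-\log(1-x)$ times itself; comparing coefficients of $x^k$ reproduces the same identity.
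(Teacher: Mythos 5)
Your proof is correct and follows essentially the same route as the paper: the lowering relation $\nabla\gamma_k=\gamma_{k-1}$ combined with the expansion $D=-\log(1-\nabla)=\sum_{j\geqslant 1}\tfrac{1}{j}\nabla^{j}$, which terminates by nilpotence. You simply supply a bit more detail than the paper does (Pascal's rule for the lowering relation, the explicit nilpotence justification, and the generating-function cross-check), but the underlying argument is identical.
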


\begin{proof}
    Recall that the operator $\nabla = 1 - e^{-D}$ acts in the following way: $\nabla(\gamma_m) = \gamma_{m-1}$ for any $n\geqslant m\geqslant 1$ and $\nabla(\gamma_0) = 0$. So, $D = - \ln(1-\nabla) = \nabla + \frac{\nabla^2}{2} + \frac{\nabla^3}{3} + \ldots$ viewed as a formal power series. Since $\gamma_m$ is identified with $\OO_{\PP_m}$, this immediately implies our lemma.
\end{proof}

\begin{lemma}
    $\OO(k)\otimes \OO_{\PP_m} = \OO_{\PP_m} + k\OO_{\PP_{m-1}} + \binom{k+1}{2}\OO_{\PP_{m-2}} + \ldots + \binom{k+m-1}{m}\OO_{\PP_0}$ for any $k\in \ZZ$.
\end{lemma}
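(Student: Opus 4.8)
The plan is to translate the statement into the integer-valued polynomial model $M_n$ and then exploit the operator $\nabla$ in exactly the same way as in the proof of Lemma 5. Under the identification $K_0(\PP_n) \cong M_n$, the class $\OO_{\PP_m}$ corresponds to $\gamma_m(t)$, and tensoring by $\OO(k)$ corresponds to the shift $e^{kD}\colon f(t) \mapsto f(t+k)$. Hence $\OO(k)\otimes\OO_{\PP_m}$ corresponds to $\gamma_m(t+k)$, and the task reduces to expanding $\gamma_m(t+k)$ in the basis $(\gamma_n, \ldots, \gamma_0)$ and reading off the coefficients.

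First I would rewrite the shift operator in terms of $\nabla$. Since $\nabla = 1 - e^{-D}$, we have $e^{-D} = 1 - \nabla$, and therefore $e^{kD} = (1-\nabla)^{-k}$ as a formal power series in $\nabla$; because $\nabla^{n+1} = 0$ on $M_n \otimes \QQ$, this series is in fact a finite sum, so the manipulation makes sense for every $k \in \ZZ$. Expanding by the generalized binomial theorem gives $(1-\nabla)^{-k} = \sum_{i \geq 0} \binom{k+i-1}{i}\nabla^i$, where I would invoke the upper-negation identity $\binom{k+i-1}{i} = (-1)^i\binom{-k}{i}$ to justify that this expansion holds uniformly for all integers $k$, positive or negative.

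Next I would apply this operator to $\gamma_m$. Recall from the preliminaries that $\nabla(\gamma_j) = \gamma_{j-1}$ for $j \geq 1$ and $\nabla(\gamma_0) = 0$, so that $\nabla^i \gamma_m = \gamma_{m-i}$ for $0 \leq i \leq m$ and $\nabla^i\gamma_m = 0$ for $i > m$. Substituting into the expansion yields
$$e^{kD}\gamma_m = \sum_{i=0}^{m}\binom{k+i-1}{i}\gamma_{m-i},$$
and translating back via $\gamma_{m-i} \leftrightarrow \OO_{\PP_{m-i}}$ gives exactly the claimed formula.

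Since the computation is short, the only real subtlety — and the point I would verify most carefully — is that both the binomial identity and the power-series manipulation remain valid for negative $k$ as well as positive ones, which is precisely where the uniform form $\binom{k+i-1}{i} = (-1)^i \binom{-k}{i}$ (and the nilpotence of $\nabla$, which truncates the sum) does the work. As an independent cross-check, one can instead read the same expansion off the generating function $\sum_{m\geq 0} \binom{t+k+m}{m} x^m = (1-x)^{-(t+1)}(1-x)^{-k}$ by comparing coefficients of $x^m$, which recovers the Vandermonde-type convolution $\binom{t+k+m}{m} = \sum_{i=0}^m \binom{k+i-1}{i}\binom{t+m-i}{m-i}$.
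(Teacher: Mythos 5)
Your proof is correct, and it takes a mildly but genuinely different route from the paper's. Both arguments start with the same reduction: pass to the polynomial model, note that tensoring by $\OO(k)$ is the shift $f(t)\mapsto f(t+k)$, and so reduce the lemma to the expansion of $\gamma_m(t+k)$ in the basis $(\gamma_m,\gamma_{m-1},\ldots,\gamma_0)$, i.e.\ to the convolution identity $\binom{t+m+k}{m}=\sum_{i=0}^{m}\binom{k+i-1}{i}\binom{t+m-i}{m-i}$. At that point the paper simply declares this a well-known identity from elementary combinatorics and stops, whereas you actually derive it: writing $e^{kD}=(1-\nabla)^{-k}=\sum_{i\geqslant 0}\binom{k+i-1}{i}\nabla^i$ in $\QQ[[\nabla]]/\nabla^{n+1}$ and applying $\nabla^i\gamma_m=\gamma_{m-i}$ produces the coefficients directly. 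This buys two things the paper's citation does not: the proof becomes self-contained within the $\nabla$-calculus already set up in the preliminaries (the same mechanism used in the paper's Lemma 2, where $D=-\ln(1-\nabla)$ is expanded), and the validity for \emph{all} integers $k$ --- including negative twists, which the lemma explicitly asserts --- is handled transparently, since the upper-negation identity $\binom{k+i-1}{i}=(-1)^i\binom{-k}{i}$ shows the expansion of $(1-\nabla)^{-k}$ is uniform in $k\in\ZZ$ and the nilpotence of $\nabla$ truncates everything to a finite sum. The paper's appeal to a known identity is shorter, but for negative $k$ that identity itself requires the generalized-binomial convention you make explicit, so your version closes a small gap the paper leaves to the reader; your generating-function cross-check is precisely the standard combinatorial proof the paper had in mind.
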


\begin{proof}
    Recall that in terms of polynomials multiplication by $\OO(k)$ is given by $$f(t)\mapsto f(t+k).$$ So, our formula can be rewritten as $$\binom{t+m+k}{m} = \binom{t+m}{m} + \binom{k}{1}\binom{t+m-1}{m-1} + \binom{k+1}{2}\binom{t+m-2}{m-2} + \ldots + \binom{k+m-1}{m}.$$ But it is a well-known identity from elementary combinatorics.
\end{proof}

\begin{proof}[Proof of Proposition 4]
    We are going to prove that this property is satisfied if we replace $\varphi$ with $D$.
    
    Let us show that $D(E) = E\otimes (\OO_{\PP_{n-1}} + \frac{1}{2}\OO_{\PP_{n-2}} + \frac{1}{3}\OO_{\PP_{n-3}} + \ldots + \frac{1}{n}\OO_{\PP_0})$ for any class~$E\in K_0(\PP_n\otimes \RR).$ Since the right hand side is linear with respect to $E$, it suffices to consider the case $E=\OO(k)$. The case $k=0$ was treated in Lemma 2. Now, let us prove it for arbitrary $k$.
    
    We have
    $$\OO(k) = \OO_{\PP_n} + k\OO_{\PP_{n-1}} + \binom{k+1}{2}\OO_{\PP_{n-2}} + \ldots + \binom{k+n-1}{n}\OO_{\PP_0},$$
    so
    
\begin{multline*}
    D(\OO(k)) = (\OO_{\PP_{n-1}} + \frac{1}{2}\OO_{\PP_{n-2}} + \ldots + \frac{1}{n}\OO_{\PP_0}) + k(\OO_{\PP_{n-2}} + \frac{1}{2}\OO_{\PP_{n-3}} + \ldots + \frac{1}{n-1}\OO_{\PP_0})+\ldots\\
    \ldots + \binom{k+n-2}{n-1}(\OO_{\PP_1}+\frac{1}{2}\OO_{\PP_0}) + \binom{k+n-1}{n}\OO_{\PP_0} =\\
    = \OO_{\PP_{n-1}} + (k + \frac{1}{2})\OO_{\PP_{n-2}} + \ldots + (\frac{1}{r} + \frac{k}{r-1} + \frac{1}{r-2}\binom{k+1}{2} + \ldots + \binom{k+r-2}{r-1})\OO_{\PP_{n-r}} + \ldots
\end{multline*}

On the other hand,
\begin{multline*}\OO(k)\otimes (\OO_{\PP_{n-1}} + \frac{1}{2}\OO_{\PP_{n-2}} + \frac{1}{3}\OO_{\PP_{n-3}} + \ldots + \frac{1}{n}\OO_{\PP_0}) = \\
= (\OO_{\PP_{n-1}} + k\OO_{\PP_{n-2}} + \binom{k+1}{2}\OO_{\PP_{n-3}} + \ldots + \binom{k+n-2}{n-1}\OO_{\PP_0}) +\\
+\frac{1}{2}(\OO_{\PP_{n-2}} + k\OO_{\PP_{n-3}} + \binom{k+1}{2}\OO_{\PP_{n-4}} + \ldots\\
\ldots + \binom{k+n-3}{n-2}\OO_{\PP_0}) + \ldots + \frac{1}{n-1}(k\OO_{\PP_1} + \OO_{\PP_0}) + \frac{1}{n}\OO_{\PP_0}=\\
=\OO_{\PP_{n-1}} + (k + \frac{1}{2})\OO_{\PP_{n-2}} + \ldots + (\frac{1}{r} + \frac{k}{r-1} + \frac{1}{r-2}\binom{k+1}{2} + \ldots + \binom{k+r-2}{r-1})\OO_{\PP_{n-r}} + \ldots
\end{multline*}

    We see that for any $i$ the coefficients of $\OO_{\PP_i}$ in the two expressions coincide. Thus, $D(\OO(k)) = \OO(k)\otimes (\OO_{\PP_{n-1}} + \frac{1}{2}\OO_{\PP_{n-2}} + \frac{1}{3}\OO_{\PP_{n-3}} + \ldots + \frac{1}{n}\OO_{\PP_0})$. This is exactly what we wanted to prove.

    Now, for any positive integer $m$ the following formula holds: $$D^m(E) = E\otimes (\OO_{\PP_{n-1}} + \frac{1}{2}\OO_{\PP_{n-2}} + \frac{1}{3}\OO_{\PP_{n-3}} + \ldots + \frac{1}{n}\OO_{\PP_0})^{\otimes m}.$$ From this we conclude that if $f$ is a polynomial, then $f(D)(E) = E\otimes F_{f(D)}$ for some $F_{f(D)} \in K_0(\PP_n)\otimes \RR$ and for any $E\in K_0(\PP_n)\otimes \RR$.
\end{proof}

\subsection{Isometries of $K_0(\PP_n)$: general results}\

Now, we are going to study the group $\Isom(K_0(\PP_n))$. It is clearly a subgroup of the \\ group $\Isom(K_0(\PP_n)\otimes \RR)$. Let us denote by $\Isom_e(K_0(\PP_n))$ the intersection $$\Isom(K_0(\PP_n))\cap \Isom_e(K_0(\PP_n)\otimes \RR).$$

\begin{lemma}
    $\Isom_e(K_0(\PP_n))$ is a subgroup of index $2$ of the group $\Isom(K_0(\PP_n))$.
\end{lemma}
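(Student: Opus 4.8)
The plan is to leverage the description of isometries obtained in the proof of Proposition 1: every element of $\Isom(K_0(\PP_n)\otimes\RR)$ is a polynomial $f(D)$ in $D$ with $f(-D)f(D)=1$, its constant term $a_0=f(0)$ necessarily equals $\pm 1$, and the identity component $\Isom_e(K_0(\PP_n)\otimes\RR)$ is exactly the set of those isometries with $a_0=1$. The key observation I would exploit is that the assignment $\epsilon\colon f(D)\mapsto f(0)$ is a group homomorphism $\Isom(K_0(\PP_n)\otimes\RR)\to\{\pm 1\}$. Indeed, the group law is multiplication in the commutative algebra of polynomials in $D$, and the constant term of a product is the product of the constant terms, so $\epsilon(f(D)g(D))=f(0)g(0)$. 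By the two-component analysis in Proposition 1, the kernel of $\epsilon$ is precisely $\Isom_e(K_0(\PP_n)\otimes\RR)$.

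Next I would restrict $\epsilon$ to the subgroup $\Isom(K_0(\PP_n))$ of integral isometries. By the very definition $\Isom_e(K_0(\PP_n))=\Isom(K_0(\PP_n))\cap\Isom_e(K_0(\PP_n)\otimes\RR)$, this intersection is exactly the kernel of the restricted homomorphism $\epsilon|_{\Isom(K_0(\PP_n))}$. The first isomorphism theorem then embeds the quotient $\Isom(K_0(\PP_n))/\Isom_e(K_0(\PP_n))$ into $\{\pm 1\}$, so the index is either $1$ or $2$, and in particular $\Isom_e(K_0(\PP_n))$ is automatically a (normal) subgroup.

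To rule out index $1$ it remains to exhibit an integral isometry lying in the non-identity component, i.e. one with $a_0=-1$. The natural candidate is $-Id$: it preserves the Euler form since $\chi(-E,-F)=\chi(E,F)$, it maps the lattice $K_0(\PP_n)$ bijectively onto itself, and as a polynomial in $D$ it is the constant $-1$, whence $\epsilon(-Id)=-1$. Therefore the restricted homomorphism is surjective and the index equals $2$. I expect no genuine obstacle here; the only points needing care are checking that $\epsilon$ is multiplicative and that its kernel is the identity component, both of which are immediate from the constant-term description, while the existence of an integral isometry with $a_0=-1$ is settled at once by $-Id$.
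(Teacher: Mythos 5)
Your proposal is correct and follows essentially the same route as the paper: the index is at most $2$ because $\Isom_e(K_0(\PP_n))$ is the intersection of $\Isom(K_0(\PP_n))$ with the identity component of the two-component group $\Isom(K_0(\PP_n)\otimes\RR)$, and the index is exactly $2$ because $-\mathrm{Id}$ is an integral isometry in the other component. Your constant-term homomorphism $\epsilon$ is just a tidier packaging of the same fact the paper uses (the paper detects $-\mathrm{Id}\notin\Isom_e$ by noting that every $e^{F(D)}$ has sole eigenvalue $1$, which, since $D$ is nilpotent, is precisely the statement $\epsilon=1$ on the identity component).
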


\begin{proof}
    We only need to show that the set $\Isom(K_0(\PP_n))\setminus \Isom_e(K_0(\PP_n))$ is nonempty. This follows from the fact that the map $E \mapsto -E$ is an isometry of the lattice $K_0(\PP_n)$ that cannot be written in the form $e^{F(D)}$, where $F$ is a polynomial, because $e^{F(D)}$ has only one eigenvalue equal to $1$ of multiplicity $n+1$.
\end{proof}

From this moment on, we will only work with the subgroup $$\Isom_e(K_0(\PP_n))\subset \Isom(K_0(\PP_n)).$$

For this group, the following result similar to Proposition 1 holds.

\begin{proposition}
    The group $\Isom_e(K_0(\mathbb P_n))$ is isomorphic to $\mathbb Z^{[\frac{n+1}{2}]}$.
\end{proposition}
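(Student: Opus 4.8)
The plan is to show that $\Isom_e(K_0(\PP_n))$ is a finitely generated free abelian group of rank exactly $[\frac{n+1}{2}]$. Since Proposition 1 already gives $\Isom_e(K_0(\PP_n)\otimes\RR)\cong\RR^{[\frac{n+1}{2}]}$ with the explicit parametrization $\varphi=\exp(a_1D+a_3D^3+\dots+a_{2k-1}D^{2k-1})$ where $k=[\frac{n+1}{2}]$, the subgroup $\Isom_e(K_0(\PP_n))$ sits inside this $\RR^{k}$ as the set of those tuples $(a_1,a_3,\dots,a_{2k-1})$ for which the operator $\exp(\sum_j a_{2j-1}D^{2j-1})$ preserves the lattice $K_0(\PP_n)$. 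The first step is to identify $\Isom_e(K_0(\PP_n))$ with a subgroup of the additive group $(\RR^{k},+)$ via the logarithm: because all these operators are of the form $e^{A}$ with $A$ nilpotent and lying in the commutative Lie algebra $\mathcal A^-$, the map $e^A\mapsto A$ is a group isomorphism from $\Isom_e(K_0(\PP_n)\otimes\RR)$ onto the vector space $\mathcal A^-\cong\RR^k$, and it carries $\Isom_e(K_0(\PP_n))$ to some subgroup $\Gamma\subset\RR^k$.

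The second step is to prove that $\Gamma$ is \emph{discrete} in $\RR^k$. This is the decisive point and the main obstacle. Discreteness would immediately force $\Gamma$ to be a free abelian group of rank at most $k$ (a discrete subgroup of $\RR^k$ is a lattice of some rank $\leqslant k$). To establish discreteness I would argue that a lattice automorphism of $K_0(\PP_n)$ has matrix entries in $\ZZ$ with respect to a fixed $\ZZ$-basis, say $(\OO_{\PP_n},\dots,\OO_{\PP_0})$; hence each coordinate of $\varphi=\exp(\sum a_{2j-1}D^{2j-1})$, read off in this basis, is an integer. Using Lemma 2 (which computes $D$ explicitly on the basis $\OO_{\PP_m}$) one can express these matrix entries as polynomials in the $a_{2j-1}$ with rational coefficients; in particular the entry just below the diagonal is a fixed nonzero rational multiple of $a_1$, so $a_1$ must lie in a fixed $\frac1N\ZZ$. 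Proceeding down the subdiagonals and using that the lower-order $a_{2j-1}$ are already constrained, I expect each $a_{2j-1}$ to be forced into a discrete subset of $\RR$, which yields discreteness of $\Gamma$.

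The third step is to show the rank is exactly $k$, i.e. that $\Gamma$ is a full lattice and not something smaller. For this it suffices to exhibit $k$ elements of $\Isom_e(K_0(\PP_n))$ whose logarithms are $\RR$-linearly independent in $\mathcal A^-$. The natural candidates come from the geometry: the twist $T\colon E\mapsto E\otimes\OO(1)$ equals $e^{D}$ and is visibly a lattice isometry (tensoring by an invertible sheaf preserves $\chi$ and the lattice), contributing the direction $D$. More generally, Proposition 4 realizes the relevant one-parameter subgroups as tensoring operators, and tensoring by an actual class of $K_0(\PP_n)$ preserves the lattice; so I would look for integral classes $F\in K_0(\PP_n)$ whose associated tensoring operator is an isometry lying in $\Isom_e$, producing logarithms with nonzero components along $D,D^3,\dots,D^{2k-1}$ respectively. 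Establishing $\RR$-linear independence of these $k$ logarithms then pins the rank at $k$.

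Combining the three steps, $\Gamma$ is a discrete subgroup of $\RR^k$ of rank $k$, hence $\Gamma\cong\ZZ^k$, and therefore $\Isom_e(K_0(\PP_n))\cong\ZZ^{[\frac{n+1}{2}]}$. I anticipate that the discreteness argument in the second step is where the real work lies: one must control \emph{all} the subdiagonal entries simultaneously and argue inductively that integrality of the matrix of $\exp(\sum a_{2j-1}D^{2j-1})$ forces each odd coefficient into a discrete set, rather than merely constraining $a_1$. The third step, by contrast, should reduce to producing explicit isometries and a rank computation, which I expect to be routine once the right integral classes are identified.
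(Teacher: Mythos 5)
Your skeleton is exactly the paper's --- pass to logarithms, get a subgroup $\Gamma$ of $\RR^{k}$ with $k=[\frac{n+1}{2}]$, prove $\Gamma$ discrete (hence free of rank $\leqslant k$), then exhibit $k$ independent elements --- but you have inverted where the difficulty lies. Discreteness, which you call ``the decisive point and the main obstacle'' and attack by an inductive analysis of subdiagonal entries, is immediate: with respect to the $\ZZ$-basis $(\OO_{\PP_n},\ldots,\OO_{\PP_0})$ every element of $\Isom(K_0(\PP_n))$ has integer matrix entries, so $\Isom(K_0(\PP_n))$ is a discrete subset of $GL(n+1,\RR)$, and since the exponential is a homeomorphism of $\mathcal A^-$ onto $\Isom_e(K_0(\PP_n)\otimes\RR)$, the group $\Gamma$ is discrete in $\RR^k$. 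The paper dispatches this in one sentence; your subdiagonal induction would work but is unnecessary machinery.

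The genuine gap is your third step, the one you dismiss as ``routine once the right integral classes are identified'': you never produce the $k$ independent elements, and the route you propose is circular. By Propositions 4 and 7, an integral class $F\in K_0(\PP_n)$ whose tensoring operator lies in $\Isom_e(K_0(\PP_n))$ is precisely the image $\varphi(\OO)$ of an element $\varphi\in\Isom_e(K_0(\PP_n))$, so ``finding the right integral classes'' is the original problem restated, and nothing in your sketch guarantees logarithms spanning all $k$ directions. The paper's resolution is a short denominator-clearing trick: for each $i$ take $\exp(b_iD^{2i-1})$ with $b_i$ a sufficiently divisible \emph{integer}, e.g.\ $b_i=n!$. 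Since $D$ is nilpotent, the series $I+b_iD^{2i-1}+\frac{b_i^2}{2}D^{4i-2}+\ldots$ is a finite sum, and the denominators arising when powers of $D$ act on the basis $(\gamma_m)$ (Lemma 2) are bounded, so a divisible enough $b_i$ maps the lattice $M_n\cong K_0(\PP_n)$ into itself; surjectivity is free because the determinant equals $1$. These $k$ operators have logarithms $b_iD^{2i-1}$, which are visibly $\RR$-linearly independent, pinning the rank at $k$. Your proof is completable, but only after this step is actually supplied, and it cannot be supplied by the tensoring detour as you describe it.
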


\begin{proof} $\Isom_e(K_0(\mathbb P_n))$ is clearly a discrete subgroup of $\Isom_e(K_0(\PP_n)\otimes \RR)$. Thus, it is a lattice, i.e. it is isomorphic to $\ZZ^k$ for some integer $k$. We need to show that~$k = [\frac{n+1}{2}]$. So, it suffices to prove that one can choose $[\frac{n+1}{2}]$ linearly independent elements in $\Isom_e(K_0(\mathbb P_n))$.

We claim that such elements can be chosen in the form $$\exp(b_1D),\quad \exp(b_2D^3), \quad \ldots, \quad \exp({b_{[\frac{n+1}{2}]}D^{2[\frac{n+1}{2}]-1}})$$ for some integers $b_i$. Indeed, any power of $D$ is a nilpotent operator, so the sum $$\exp({b_iD^{2i-1}}) = I + b_iD^{2i-1} + \frac{b_i^2}{2}D^{4i - 2} + \ldots$$ is actually finite, so we can take any integer $b_i$ such that all coefficients in this sum become integer after multiplication by $b_i$ (for example, we can choose $b_i = n!$).

If we choose $b_i$ this way, the operators $\exp({b_iD^{2i-1}})$ map the lattice $K_0(\PP_n)$ into itself. Surjectivity of these maps follows from the fact that the determinant of $\exp({b_iD^{2i-1}})$ equals~$1$, so the restriction of this operator to our lattice is invertible. This completes the proof of the proposition.

\end{proof}

\begin{remark}
    We would like to stress that the operators $e^{n!D^{2i-1}}$ do not necessarily form a basis of $\Isom(K_0(\PP_n))$. Moreover, we will see later that for some $n$ it is not even possible to choose $b_i$ in such a way that $\exp({b_iD^{2i-1}})$ generate the lattice $\Isom(K_0(\PP_n))$. Vice versa, there are elements of the form $\exp({a_1D + a_3D^3 + \ldots})$ lying in $\Isom(K_0(\PP_n))$ such that not all coefficients $a_i$ are integer.
\end{remark}

As we have already said, in order to compute the elements of $\Isom(K_0(\PP_n))$, it is sufficient to choose $a_1, a_2, \ldots\in \RR$ such that the operator $e^{a_1D + a_2D^3 + \ldots}$ maps the lattice $K_0(\PP_n))$ bijectively to itself. As we showed in the proof of Proposition 5, bijectivity is automatic once the lattice is mapped to itself. Nevertheless, this does not immediately allow us to compute all elements of $\Isom(K_0(\PP_n))$ explicitly for general $n$. However, we can prove the following assertion which is useful for computation of the isometry group~$\Isom_e(K_0(\PP_n))$.

\begin{proposition}
    If $e^{a_1D + a_2D^3 + a_3D^5 + \ldots}\in \Isom_e(K_0(\PP_n))$ for some $a_1, a_2, \ldots \in \RR$, then~$a_1\in~\ZZ$. Also, in this case $e^{b_1D + a_2D^3 + a_3D^5 + \ldots}\in \Isom_e(K_0(\PP_n))$ for any $b_1\in \ZZ$.
\end{proposition}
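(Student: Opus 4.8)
The plan is to prove the two assertions separately: first I would extract $a_1$ from a single coordinate of the image $\varphi(\OO_{\PP_n})$, where $\varphi := e^{a_1 D + a_2 D^3 + a_3 D^5 + \ldots}$, and then reduce the second claim to the first by factoring off an integer twist. Throughout I work in the basis $(\OO_{\PP_n}, \OO_{\PP_{n-1}}, \ldots, \OO_{\PP_0})$, identified with $(\gamma_n, \gamma_{n-1}, \ldots, \gamma_0)$, in which the action of $D$ is transparent by Lemma 2.

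First I would show $a_1 \in \ZZ$. By Lemma 2 we have $D(\gamma_k) = \gamma_{k-1} + \frac{1}{2}\gamma_{k-2} + \ldots + \frac{1}{k}\gamma_0$, so $D$ lowers the top $\gamma$-index by exactly one, and hence $D^j(\gamma_n) = \gamma_{n-j} + (\text{terms } \gamma_{n-j-1}, \ldots, \gamma_0)$. Writing $L = a_1 D + a_2 D^3 + a_3 D^5 + \ldots$, the summand $a_1 D$ is the only one reaching index $n-1$, so $L$ lowers the top index by at least one and $L^m$ for $m \geq 2$ lowers it by at least two. Expanding
$$\varphi(\gamma_n) = \gamma_n + L(\gamma_n) + \tfrac{1}{2}L^2(\gamma_n) + \ldots,$$
I find that the only contribution to the coefficient of $\gamma_{n-1} = \OO_{\PP_{n-1}}$ comes from $a_1 D(\gamma_n)$, so that coefficient equals $a_1$. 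Since $\varphi \in \Isom_e(K_0(\PP_n))$ maps the lattice into itself, $\varphi(\OO_{\PP_n})$ is an integral combination of the $\OO_{\PP_k}$, and therefore $a_1 \in \ZZ$.

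For the second assertion, I note that all operators involved are power series in $D$ and hence commute, so I can factor
$$e^{b_1 D + a_2 D^3 + a_3 D^5 + \ldots} = e^{(b_1 - a_1)D}\circ e^{a_1 D + a_2 D^3 + a_3 D^5 + \ldots} = e^{(b_1 - a_1)D}\circ \varphi.$$
By the first part $a_1 \in \ZZ$, so $b_1 - a_1 \in \ZZ$ and $e^{(b_1-a_1)D}$ is the twist $E \mapsto E \otimes \OO(b_1 - a_1)$, which is an isometry of the lattice lying in the identity component $\Isom_e(K_0(\PP_n))$. As $\Isom_e(K_0(\PP_n))$ is a group and $\varphi$ belongs to it by hypothesis, the composite $e^{(b_1-a_1)D}\circ\varphi$ also lies in $\Isom_e(K_0(\PP_n))$, which is exactly the claim.

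The only point requiring care is the order-of-vanishing bookkeeping in the first step, namely that among the exponents $1, 3, 5, \ldots$ only $D^1$ can contribute to the $\OO_{\PP_{n-1}}$-coordinate of $\varphi(\OO_{\PP_n})$, so that this coordinate isolates $a_1$ cleanly. Equivalently, under the isomorphism $M_n \otimes \QQ \cong \QQ[[\nabla]]/\nabla^{n+1}$ the operator $\varphi$ becomes multiplication by $e^{a_1 D + a_2 D^3 + \ldots}$ with $D = \nabla + \frac{1}{2}\nabla^2 + \ldots$, whose coefficient of $\nabla^1$ is precisely $a_1$; membership in the lattice $\ZZ[[\nabla]]/\nabla^{n+1}$ forces this coefficient to be an integer. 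I expect no genuine obstacle beyond this elementary computation.
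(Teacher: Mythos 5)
Your proposal is correct and follows essentially the same route as the paper: both arguments observe that the coefficient of $D$ in the expansion of $e^{a_1D + a_2D^3 + \ldots}$ is exactly $a_1$, extract its integrality from one coordinate of the image of a single lattice vector (you test $\OO = \gamma_n$ and read off the $\OO_{\PP_{n-1}}$-coefficient, while the paper tests $\OO_{\PP_1} = \gamma_1$), and both prove the second claim by the identical factorization $e^{b_1D + a_2D^3 + \ldots} = e^{(b_1-a_1)D}\cdot e^{a_1D + a_2D^3 + \ldots}$ with $e^{(b_1-a_1)D}$ an integer twist. Your order-of-vanishing bookkeeping via Lemma 2 is sound, so there is no gap.
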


\begin{proof}
    $$e^{a_1D + a_2D^3 + a_3D^5 + \ldots} = e^{a_1D}e^{a_2D^3}e^{a_3D^5}\ldots = $$
    $$= (I+a_1D +\frac{a_1^2}{2}D^2 +\ldots)(I+a_2D^3 + \frac{a_2^2}{2}D^6 + \ldots)(I + a_3D^5 + \frac{a_3^2}{2}D^{10} + \ldots)\ldots =$$ $$ = I + a_1D + \frac{a_1^2}{2}D^2 + \ldots,$$ i.e. the coefficient of $D$ in the expression $D$ $e^{a_1D + a_2D^3 + a_3D^5 + \ldots}$ is $a_1$. This means that the polynomial $t+1$ (which corresponds to the class $\OO_{\PP_1}\in K_0(\PP_n)$) is sent by this operator to $1 + (1+a_1)t$. So, $a_1\in \ZZ$.
    
    At the same time, if $c\in \ZZ$, then $e^{cD}$ is an isometry of the lattice $K_0(\PP_n)$, because $e^{cD}$ maps the polynomial $f(t)$ to $f(t+c)$. Hence, for any $b_1\in \ZZ$ we have $e^{b_1D + a_2D^3 + a_3D^5 + \ldots} = e^{a_1D + a_2D^3 + a_3D^5 + \ldots}\cdot e^{(b_1-a_1)D}$, so the second part of the proposition is also proved. 
\end{proof}

\begin{remark}
In practice, this proposition means that when computing the isometries of $K_0(\PP_n)$ we can without loss of generality assume that $a_1 = 0$.
\end{remark}

\begin{proposition}
    If the operator $e^{a_1D + a_2D^3 + a_3D^5 + \ldots}$ maps $\OO$ to some element of the lattice $K_0(\PP_n)$, then $e^{a_1D + a_2D^3 + a_3D^5 + \ldots}\in \Isom_e(K_0(\PP_n))$.
\end{proposition}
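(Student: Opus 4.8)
The plan is to reduce everything to the multiplicative structure on the lattice together with the description of isometries as tensor multiplications provided by Proposition 4. Write $\varphi = e^{a_1D + a_2D^3 + a_3D^5 + \ldots}$. First I would note that $\varphi$ is automatically an element of $\Isom_e(K_0(\PP_n)\otimes\RR)$: its exponent is a linear combination of odd powers of $D$, hence antiselfadjoint (since $D^\vee = -D$ implies $(D^{2k-1})^\vee = -D^{2k-1}$), so its exponential is an isometry lying in the identity component, exactly as in the proof of Proposition 1. Consequently the only thing left to establish is that $\varphi$ maps the integral lattice $K_0(\PP_n)$ into itself, because the determinant of $\varphi$ equals $1$ and, by the argument in Proposition 5, an operator of determinant $1$ carrying the lattice into itself is automatically bijective on it; thus $\varphi$ will land in $\Isom_e(K_0(\PP_n))$.

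Next I would invoke Proposition 4 to write $\varphi(E) = E \otimes F_\varphi$ for a fixed class $F_\varphi \in K_0(\PP_n)\otimes\RR$. The proof of that proposition in fact produces the tensor formula for every polynomial in $D$, and since $D$ is nilpotent the exponential series is a genuine polynomial, so the hypothesis of Proposition 4 is met without extra work. The decisive observation is that $F_\varphi$ is nothing but $\varphi(\OO)$: as $\OO$ is the unit of the tensor ring, we have $\varphi(\OO) = \OO \otimes F_\varphi = F_\varphi$. The hypothesis of the proposition states precisely that $\varphi(\OO)$ lies in the integral lattice $K_0(\PP_n)$, and therefore $F_\varphi \in K_0(\PP_n)$.

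Finally I would use the ring structure recalled in Section 1: under the identification $K_0(\PP_n)\cong \ZZ[[\nabla]]/\nabla^{n+1}$, the lattice is closed under tensor product. Hence for every $E \in K_0(\PP_n)$ the class $\varphi(E) = E \otimes F_\varphi$ is a product of two integral classes, so it is again integral. This shows $\varphi\bigl(K_0(\PP_n)\bigr)\subseteq K_0(\PP_n)$, which combined with the first paragraph finishes the argument.

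I do not expect a genuine obstacle here: given Proposition 4 and the closedness of the lattice under multiplication, the statement is essentially immediate, and the whole point of the proposition is the useful reduction that checking the single vector $\OO$ suffices to certify that the entire lattice is preserved. The only care required is to confirm that Proposition 4 applies to $\varphi$ (it does, since $\varphi$ is polynomial in $D$) and to use the normalization $E\otimes\OO = E$ correctly when identifying $F_\varphi$ with $\varphi(\OO)$.
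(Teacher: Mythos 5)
Your proof is correct and takes essentially the same approach as the paper: both apply the tensor-multiplication description of Proposition 4 to write $\varphi(E) = E\otimes F_\varphi$, identify $F_\varphi = \varphi(\OO)$ using that $\OO$ is the unit, and conclude that $E\otimes F_\varphi$ stays in the lattice because $K_0(\PP_n)$ is closed under tensor product. Your additional remarks --- that the exponent is antiselfadjoint so $\varphi$ lies in the identity component, and that determinant $1$ gives bijectivity on the lattice --- merely spell out facts the paper had already established before stating the proposition.
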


\begin{proof}
    By Proposition 5 $\varphi = e^{a_1D + a_2D^3 + a_3D^5 + \ldots}$ maps any $E\in K_0(\PP_n)$ to $E\otimes F_\varphi$ for some element $F_\varphi \in K_0(\PP_n)\otimes \RR$. In particular, $\OO$ is mapped to $\OO\otimes F_\varphi = F_\varphi$. So, if $\OO$ is mapped to an element of $K_0(\PP_n)$, then $F_\varphi\in K_0(\PP_n)$, hence $E\otimes F_\varphi\in K_0(\PP_n)$ for any~$E\in K_0(\PP_n)$.
\end{proof}

Proposition 7 allows us to simplify computation of coefficients $a_1, a_2, \ldots$ satisfying the condition $e^{a_1D + a_2D^3 + a_3D^5 + \ldots}\in \Isom_e(K_0(\PP_n))$. Without this result, it would be necessary to check that each of the elements $\OO_{\PP_0}, \OO_{\PP_1}, \ldots \OO_{\PP_n}$ or some other basis of $K_0(\PP_n)$ is mapped to an element of the lattice $K_0(\PP_n)$ by the operator $e^{a_1D + a_2D^3 + a_3D^5 + \ldots}$. Proposition~7 shows that it is sufficient to check this only for $\OO_{\PP_n} = \OO$.

At the end of this section we discuss some special elements of $\Isom(K_0(\PP_n))$. One of such isometries is the operator $e^D$. In terms of integer valued polynomials, it acts as $f(t)\mapsto f(t+1)$. In terms of $K_0(\PP_n)$, it is the twisting that maps $E$ to $E(1) = E\otimes \OO(1)$.

Now, consider the case when all coefficients of $a_1D + a_2D^3 + a_3D^5 + \ldots$ are zero except for the last one, i.e. $a_{[\frac{n+1}{2}]}$. It looks differently for even and odd $n$.

Suppose that $n = 2k + 1$ is odd. Then $[\frac{n+1}{2}] = k$, $\exp({a_kD^{2k+1}}) = I + a_kD^{2k+1}$. The polynomial $\gamma_n = \binom{t+n}{n} = \frac{1}{n!}(t+1)(t+2)\ldots (t+n)$ is mapped to $\gamma_n(t) + a_k$ by the operator~$I + a_kD^{2k+1}$. So, $a_k\in\ZZ$. Conversely, if  $a_k\in\ZZ$, then we obtain an isometry of $K_0(\PP_n)$ by Proposition 7.

So, with respect to the basis $(\OO_{\PP_0}, \OO_{\PP_1}, \ldots \OO_{\PP_{n-1}}, \OO_{\PP_n})$, the isometry $\exp({D^{2k+1}})$ has the following matrix:

\[\begin{pmatrix}
    1 & 0 & \dots & 1\\
    0 & 1 & \dots  & 0 \\
    \vdots & \vdots & \ddots & \vdots \\
    0 & 0 & \dots & 1
\end{pmatrix}.\]

So, this isometry maps $\OO(m)$ to $\OO(m) + \OO_{\PP_0}$ for any $m$. So, it acts in the following way: $$E\mapsto E\otimes (\OO + \OO_{\PP_0}).$$

Next, suppose that $n = 2k$ is even. In this case, $[\frac{n+1}{2}] = k$, $\exp({a_kD^{2k-1}}) = I + a_kD^{2k-1}$. The polynomial $\gamma_{2k} = \binom{t+2k}{2k} = \frac{1}{(2k)!}(t+1)(t+2)\ldots (t+2k)$ is mapped to $\gamma_{2k}(t) + t + \frac{2k+1}{2}a_k$ by the operator $I + a_kD^{2k}$, so $a_k$ must be an even integer. Again, this condition is sufficient by Proposition 7.

With respect to the basis $(\OO_{\PP_0}, \OO_{\PP_1}, \ldots \OO_{\PP_{n-1}}, \OO_{\PP_n})$, the isometry $\exp({2D^{2k-1}})$ has the following matrix:

\[\begin{pmatrix}
    1 & 0 & \dots & 0 & 2 & 2k-1\\
    0 & 1 & \dots  & 0 & 0 & 2 \\
    \vdots & \vdots & \ddots & \vdots &\vdots & \vdots\\
    0 & 0 & \dots & 1 & 0 & 0\\
    0 & 0 & \dots & 0 & 1 & 0\\
    0 & 0 & \dots & 0 & 0 & 1
\end{pmatrix}.\]

So, this isometry maps $\OO(m)$ to $\OO(m) + 2\OO_{\PP_1} + (2k + 2m - 1)\OO_{\PP_0}$. Similarly to the case of odd $n$, we conclude that this isometry acts in the following way: $$E\mapsto E\otimes(\OO + 2\OO_{\PP_1} + (2k - 1)\OO_{\PP_0}).$$

The previous discussion can be summarized in the following proposition.

\begin{proposition}
\begin{enumerate}
    \item If $n\geqslant 4$ is even, then $\exp({a_{[\frac{n+1}{2}]}D^{2[\frac{n+1}{2}] - 1}})\in \Isom_e(K_0(\PP_n))$ if and only if $a_{[\frac{n+1}{2}]}$ is an even integer. The isometry $\exp({2D^{2[\frac{n+1}{2}] - 1}})$ acts as $$E\mapsto E\otimes(\OO + 2\OO_{\PP_1} + (n - 1)\OO_{\PP_0}).$$
    
    \item If $n\geqslant 3$ is odd, then the operator $\exp({a_{[\frac{n+1}{2}]}D^{2[\frac{n+1}{2}] - 1}})\in \Isom_e(K_0(\PP_n))$ if and only if $a_{[\frac{n+1}{2}]}$ is an integer. The isometry $\exp({D^{2[\frac{n+1}{2}] - 1}})$ acts as $$E\mapsto E\otimes(\OO + \OO_{\PP_0}).$$

\end{enumerate}
\end{proposition}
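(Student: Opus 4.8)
The plan is to reduce the whole statement to a single computation: the image of $\OO$ under the operator in question. Write $a = a_{[\frac{n+1}{2}]}$ and $\varphi = \exp(aD^{2[\frac{n+1}{2}]-1})$ for brevity. By Proposition 7, $\varphi\in\Isom_e(K_0(\PP_n))$ if and only if $\varphi(\OO)$ lies in the lattice $K_0(\PP_n)$, and by Proposition 4 the action of such a $\varphi$ is automatically $E\mapsto E\otimes F_\varphi$ with $F_\varphi = \varphi(\OO)$. Under the identification $K_0(\PP_n)\cong M_n$ we have $\OO = \gamma_n$, so it suffices to compute $\varphi(\gamma_n)$ and to express it in the basis $(\gamma_n,\ldots,\gamma_0)=(\OO_{\PP_n},\ldots,\OO_{\PP_0})$.

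First I would note that the exponential truncates to $\varphi = I + aD^{2[\frac{n+1}{2}]-1}$. Indeed $D^{n+1}=0$ on $M_n$; for odd $n$ the relevant power is $D^n$ and $(D^n)^2=D^{2n}=0$, while for even $n\geqslant 4$ the relevant power is $D^{n-1}$ and $(D^{n-1})^2 = D^{2n-2}=0$ since $2n-2\geqslant n+1$. This is exactly where the hypothesis $n\geqslant 4$ is needed in the even case: for $n=2$ the top odd power is $D$ itself and the exponential does not collapse to a single term.

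Next I would apply the relevant power of $D$ to $\gamma_n = \frac{1}{n!}(t+1)(t+2)\cdots(t+n)$, using only its two leading coefficients. In the odd case $D^n\gamma_n = 1 = \gamma_0$, so $\varphi(\gamma_n)=\gamma_n + a\gamma_0$, which lies in the lattice if and only if $a\in\ZZ$; taking $a=1$ gives $F_\varphi = \OO + \OO_{\PP_0}$. In the even case $n=2k$, reading off the coefficients of $t^{2k}$ and $t^{2k-1}$ in $(t+1)\cdots(t+2k)$ yields $D^{n-1}\gamma_n = t + \frac{n+1}{2}$. Rewriting $t = \gamma_1-\gamma_0$ turns the correction term into $a\gamma_1 + \frac{(n-1)a}{2}\gamma_0$, so lattice membership forces both $a\in\ZZ$ and $\frac{(n-1)a}{2}\in\ZZ$; as $n-1$ is odd, this is equivalent to $a$ being even. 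Taking $a=2$ then gives $F_\varphi = \OO + 2\OO_{\PP_1} + (n-1)\OO_{\PP_0}$.

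The entire content of the argument sits in the even case, where the subleading coefficient of $\gamma_n$ produces the half-integer $\frac{n+1}{2}$; after passing to the $\gamma$-basis this is precisely what forces the parity condition on $a$. I expect the only delicate points to be purely organisational — checking that the exponential truncates (whence the hypothesis $n\geqslant 4$) and correctly rewriting the monomial $t$ in terms of $\gamma_0$ and $\gamma_1$ — since the lattice criterion and the tensor description of the action are supplied directly by Propositions 7 and 4.
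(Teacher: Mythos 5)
Your proof is correct and follows essentially the same route as the paper's own argument: the same truncation of the exponential to $I + aD^{2[\frac{n+1}{2}]-1}$ (with the same explanation of why $n\geqslant 4$ is needed in the even case), the same key computations $D^n\gamma_n = \gamma_0$ and $D^{n-1}\gamma_n = t + \frac{n+1}{2}$ expanded in the $\gamma$-basis to extract the integrality conditions, and the same appeal to Proposition 7 for sufficiency. The only cosmetic difference is that you obtain the tensor description directly from Proposition 4 via $F_\varphi = \varphi(\OO)$, whereas the paper writes out the matrix of the isometry in the basis $(\OO_{\PP_0},\ldots,\OO_{\PP_n})$ and verifies the action on the line bundles $\OO(m)$; your shortcut is the cleaner bookkeeping and is exactly how the paper itself argues in its later Proposition 12.
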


The isometries from Proposition 8 admit the following alternative decription. For odd $n = 2k + 1$ the operator $\exp({D^{2k+1}})$ can be written as
$$E\mapsto E + rk(E)\OO_{\PP_0}$$

and for even $n = 2k$ the operator $\exp({2D^{2k-1}})$ can be written as
$$E \mapsto E + 2rk(E)\OO_{\PP_1} + (2c_1(E) + (2k-1)rk(E))\OO_{\PP_0},$$
where $rk(E)$ and $c_1(E)$ are the rank and the first Chern class of $E\in K_0(\PP_n)$ respectively. These formulas follow from the fact that they hold for $E = \OO(m)$ and from linearity of the rank and the first Chern class.

Note that this alternative description is possible in this particular case because the coefficients of $\OO_{\PP_{m-1}}$ and $\OO_{\PP_m}$ in the expression for $\OO(k)\otimes \OO_{\PP_m}$ from Lemma 3 are linear polynomials in $k$. So, the whole expression is linear in $k$, and we are done by linearity of the rank and the first Chern class. However, in general case the coefficients in the expression $\OO_{\PP_m} + k\OO_{\PP_{m-1}} + \binom{k+1}{2}\OO_{\PP_{m-2}} + \ldots + \binom{k+m-1}{m}\OO_{\PP_0}$ are polynomials in $k$ of degree greater than $1$. This is why such an alternative way of writing an isometry is not possible generally.

\subsection{Isometries of $K_0(\PP_n)$: explicit formulas for small $n$}\

Here we formulate several propositions that describe the group $K_0(\PP_n)$ for $n\leqslant 6$ in explicit terms. The simplest cases are $n=1$ and $n=2$.

\begin{proposition}
    For $n=1,2$ the group $\Isom_e(K_0(\PP_n))$ is isomorphic to $\ZZ$ and is generated by the twisting $E\mapsto E(1) = E\otimes \OO(1)$. 
\end{proposition}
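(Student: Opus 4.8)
The plan is to combine Proposition 5, which already pins down the abstract isomorphism type, with Proposition 6, which controls the linear coefficient, while exploiting the fact that higher odd powers of $D$ simply vanish in these low-dimensional cases. First I would invoke Proposition 5: since $[\frac{n+1}{2}] = 1$ for both $n = 1$ and $n = 2$, the group $\Isom_e(K_0(\PP_n))$ is already known to be isomorphic to $\ZZ$. It therefore remains only to identify an explicit generator, and the natural candidate is the twisting $E \mapsto E(1)$, which corresponds to the operator $e^D$ under the identification of $K_0(\PP_n) \otimes \RR$ with $\RR[t]_{\leqslant n}$.

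The key observation is that on $\RR[t]_{\leqslant n}$ with $n \leqslant 2$ the operator $D$ lowers the degree of a polynomial by one, so $D^3 = 0$ (and already $D^2 = 0$ when $n = 1$). Consequently, in the general expression $e^{a_1 D + a_2 D^3 + a_3 D^5 + \ldots}$ for an element of $\Isom_e(K_0(\PP_n) \otimes \RR)$, every term beyond $a_1 D$ disappears, and each such element reduces to $e^{a_1 D}$. Applying Proposition 6 then forces $a_1 \in \ZZ$, so every element of $\Isom_e(K_0(\PP_n))$ is of the form $e^{a_1 D} = (e^D)^{a_1}$ with $a_1 \in \ZZ$. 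Since $e^D$ is precisely the twisting $E \mapsto E(1)$, this exhibits the twisting as a generator and completes the argument.

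I do not expect a genuine obstacle here: the whole point is that for $n \leqslant 2$ there are no nontrivial higher odd powers of $D$ to contend with, so the single linear coefficient $a_1$ governs everything, and Proposition 6 hands over its integrality for free. The only thing to verify carefully is the degree bookkeeping that makes $D^3$ (and, for $n = 1$, already $D^2$) vanish on $\RR[t]_{\leqslant n}$, which is immediate since $D$ strictly decreases degree.
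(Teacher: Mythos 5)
Your proposal is correct and follows essentially the same route as the paper: Proposition 5 gives the isomorphism $\Isom_e(K_0(\PP_n))\cong\ZZ$ since $[\frac{n+1}{2}]=1$, and Proposition 6 forces the generator $e^{a D}$ to have $a\in\ZZ$, so the twisting $e^D\colon E\mapsto E\otimes\OO(1)$ generates. Your explicit check that $D^3=0$ (and $D^2=0$ for $n=1$) on $\RR[t]_{\leqslant n}$ just makes transparent what the paper leaves implicit in Proposition 1's description of $\Isom_e(K_0(\PP_n)\otimes\RR)$.
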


\begin{proof}
    For $n=1,2$ we have $[\frac{n+1}{2}] = 1$, so $\Isom_e(K_0(\PP_n))$ is indeed isomorphic to $\ZZ$ and generated by $e^{aD}$ for some $a\in \RR$. From Proposition 6 it follows that $a = 1$. As we have already said, $e^D$ is the twisting $E\mapsto E\otimes \OO(1)$. This completes the proof.
\end{proof}

The situation becomes more interesting when $n=3$. We have the following result.

\begin{proposition}
    The group $\Isom_e(K_0(\PP_3))$ is isomorphic to $\ZZ^2$. It is generated by the operators $E \mapsto E\otimes \OO(1)$ and $E\mapsto E\otimes (\OO + \OO_{\PP_0})$.
\end{proposition}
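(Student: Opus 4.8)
The plan is to leverage the rank computation of Proposition 5 together with the coefficient constraints of Propositions 6 and 8, reducing the whole statement to bookkeeping in $\ZZ^2$. Since $n=3$ gives $[\frac{n+1}{2}]=2$, Proposition 5 already yields $\Isom_e(K_0(\PP_3))\cong\ZZ^2$, so it suffices to exhibit an explicit basis. I would start from the general form of an element of the identity component, namely $\varphi=e^{a_1D+a_2D^3}$ with $a_1,a_2\in\RR$ (the only odd powers of $D$ available are $D$ and $D^3=D^n$).

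The main point is to pin down the two coefficients. Proposition 6 immediately forces $a_1\in\ZZ$. To control $a_2$, I would use that $D$ and $D^3$ commute, so that
$$\varphi\cdot e^{-a_1D}=e^{a_2D^3}.$$
Since $-a_1\in\ZZ$, the operator $e^{-a_1D}$ is the twisting $E\mapsto E\otimes\OO(-a_1)$ and therefore lies in $\Isom_e(K_0(\PP_3))$; hence $e^{a_2D^3}$ lies there as well. Applying Proposition 8(2) with $n=3$ (where $2[\frac{n+1}{2}]-1=3$) then gives $a_2\in\ZZ$. Consequently every element of the group factors as $\varphi=(e^{D})^{a_1}(e^{D^3})^{a_2}$ with integer exponents, so $\Isom_e(K_0(\PP_3))$ is generated by $e^{D}$ and $e^{D^3}$.

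Finally I would check that these two generators actually form a basis and identify their actions. Passing to logarithms sends $e^{D}$ and $e^{D^3}$ to $D$ and $D^3$, which are linearly independent in $\mathcal{A}^-$; since a free abelian group of rank $2$ generated by two elements has those elements as a basis, we conclude $\Isom_e(K_0(\PP_3))\cong\ZZ^2$ with basis $\{e^{D},e^{D^3}\}$. The description of the actions is then read off directly: $e^{D}$ is the twisting $E\mapsto E\otimes\OO(1)$, while Proposition 8(2) identifies $e^{D^3}$ with $E\mapsto E\otimes(\OO+\OO_{\PP_0})$.

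The only step that is not purely formal is the reduction controlling $a_2$: the subtlety is that Proposition 8 is stated for the \emph{pure} top-power operator $e^{a_2D^3}$, whereas a general isometry carries a nonzero $D$-term as well. I expect this to be the crux, and it is resolved by peeling off the integer twist $e^{-a_1D}$ using commutativity of $D$ and $D^3$, after which Proposition 8 applies verbatim. Everything else is routine manipulation of the $\ZZ^2$ structure.
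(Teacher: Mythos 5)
Your proposal is correct and follows essentially the same route as the paper: reduce to $a_1\in\ZZ$ via Proposition 6, peel off the integer twist to isolate $e^{a_2D^3}$ (which is exactly how the paper ``assumes $a=0$''), and then invoke the odd-$n$ case of Proposition 8 to force $a_2\in\ZZ$ and to identify $e^{D^3}$ with $E\mapsto E\otimes(\OO+\OO_{\PP_0})$. Your extra verification that $e^D, e^{D^3}$ form a basis (via linear independence of $D$ and $D^3$) is left implicit in the paper but is the same argument in substance.
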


\begin{proof}
    Any element of the group $\Isom_e(K_0(\PP_3))$ can be written as $e^{aD + bD^3}$, где $a,b\in\RR$. We know that $e^D$ is an isometry acting as $E \mapsto E\otimes \OO(1)$. To find other isometries, we can assume that $a = 0$ (here we again use Proposition 6). But we have already shown that for $n=2k-1$  we have $e^{bD^{2k-1}}\in \Isom_e(K_0(\PP_{2k-1}))$ only for $b\in \ZZ$. This gives us a generator of the group $\Isom_e(K_0(\PP_3))$ equal to $e^{D^3}$ and given by the formula $E\mapsto E\otimes(\OO + \OO_{\PP_0})$.
\end{proof}

For $n=4$, we have a similar proposition.

\begin{proposition}
    The group $\Isom_e(K_0(\PP_4))$ is isomorphic to $\ZZ^2$. It is generated by the operators $E \mapsto E\otimes \OO(1)$ and $E\mapsto E\otimes (\OO + 2\OO_{\PP_1} + 3\OO_{\PP_0})$.
\end{proposition}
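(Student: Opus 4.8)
The plan is to run the same argument as for $\PP_3$ in the preceding proposition, but using the even case of Proposition 8 in place of the odd case. First I would record that $[\frac{n+1}{2}] = 2$ for $n = 4$, so Proposition 5 already delivers $\Isom_e(K_0(\PP_4)) \cong \ZZ^2$ and reduces everything to exhibiting an explicit basis. On $\RR[t]_{\leqslant 4}$ the operator $D^5$ vanishes, so the antiselfadjoint operators are exactly the combinations $a_1 D + a_2 D^3$, and Proposition 1 tells us that the assignment $(a_1,a_2) \mapsto e^{a_1 D + a_2 D^3}$ is a group isomorphism from $(\RR^2,+)$ onto $\Isom_e(K_0(\PP_4)\otimes\RR)$ (the two exponentials commute, being polynomials in $D$). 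The first basis vector is clear: $e^D$ is the twisting $E \mapsto E \otimes \OO(1)$.

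For the second basis vector I would pin down which pairs $(a_1, a_2)$ yield a genuine isometry of the lattice. Proposition 6 forces $a_1 \in \ZZ$; since then $e^{a_1 D} = (e^D)^{a_1}$ is itself a lattice isometry, the quotient $e^{a_2 D^3} = e^{a_1 D + a_2 D^3}\,(e^D)^{-a_1}$ again lies in $\Isom_e(K_0(\PP_4))$. Because $n = 4$ is even, part (1) of Proposition 8 now applies verbatim and shows that this happens precisely when $a_2$ is an even integer, the minimal positive choice $\exp(2D^3)$ acting as $E \mapsto E \otimes (\OO + 2\OO_{\PP_1} + 3\OO_{\PP_0})$ (here $n - 1 = 3$). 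Conversely, any $(a_1, a_2)$ with $a_1 \in \ZZ$ and $a_2 \in 2\ZZ$ does give a lattice isometry, being a product of the two exponentials just described.

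Combining these facts, under the isomorphism of Proposition 1 the subgroup $\Isom_e(K_0(\PP_4))$ corresponds exactly to $\{(a_1, a_2) : a_1 \in \ZZ,\ a_2 \in 2\ZZ\} = \ZZ\cdot(1,0) \oplus \ZZ\cdot(0,2)$, which is freely generated by the classes of $e^D$ and $e^{2D^3}$. This identifies the two stated operators as a basis and finishes the proof. The only point requiring care --- and the only place where real content enters --- is the sharp parity statement $a_2 \in 2\ZZ$ (rather than merely $a_2 \in \ZZ$): it is what guarantees that $e^D$ and $e^{2D^3}$ generate the full lattice and not a proper finite-index sublattice, and it is supplied by the even case of Proposition 8 together with Proposition 6. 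Everything else is formal bookkeeping in the parametrization $(a_1,a_2) \mapsto e^{a_1 D + a_2 D^3}$.
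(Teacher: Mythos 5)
Your proof is correct and takes essentially the same route as the paper: the paper's own proof just says to repeat the $\PP_3$ argument with part (1) of Proposition 8 in place of the odd case, which is precisely what you do --- Proposition 6 normalizes $a_1\in\ZZ$ and reduces to $e^{a_2D^3}$, and the even case of Proposition 8 supplies the parity condition $a_2\in 2\ZZ$ together with the explicit action of $\exp(2D^3)$ as $E\mapsto E\otimes(\OO + 2\OO_{\PP_1} + 3\OO_{\PP_0})$. Your write-up merely makes explicit the lattice bookkeeping ($\ZZ\cdot(1,0)\oplus\ZZ\cdot(0,2)$) that the paper leaves implicit.
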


\begin{proof}
    The proof is similar to the proof of the previous proposition. The only difference is that we need to apply the first part of Proposition~8 from the previous section.
\end{proof}

For $n\geqslant 5$, the situation becomes more difficult. As an example, we will describe $\Isom_e(K_0(\PP_n))$ explicitly for $n=5,6$.

\begin{proposition}
    The group $\Isom_e(K_0(\PP_5))$ is isomorphic to  $\ZZ^3$ and is generated by the operators
    \begin{align*}
    & E\mapsto E\otimes \OO(1),\\
    & E \mapsto E\otimes(\OO + 2\OO_{\PP_2} + 3\OO_{\PP_1} + 4\OO_{\PP_0}),\\
    & E\mapsto E\otimes (\OO + \OO_{\PP_0}).
    \end{align*}
\end{proposition}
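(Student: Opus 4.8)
The plan is to use Proposition 5 to reduce the statement to a concrete lattice computation. Since $n=5$ gives $[\frac{n+1}{2}]=3$, Proposition 5 tells us that $\Isom_e(K_0(\PP_5))\cong\ZZ^3$, and the discussion following Proposition 1 shows that every element is $e^{a_1D+a_2D^3+a_3D^5}$ for real $a_1,a_2,a_3$. Because $D,D^3,D^5$ are commuting nilpotent operators, the assignment $e^{a_1D+a_2D^3+a_3D^5}\mapsto(a_1,a_2,a_3)$ is an injective homomorphism identifying $\Isom_e(K_0(\PP_5))$ with a rank-$3$ lattice $L\subset\RR^3$. The first listed operator is the twist $e^{D}$, i.e. $(1,0,0)$; the third is $e^{D^5}$, i.e. $(0,0,1)$, which lies in $L$ by part (2) of Proposition 8; and I will show that the second is $e^{2D^3+\frac12 D^5}$, i.e. $(0,2,\tfrac12)$. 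So the goal is to prove that these three vectors form a $\ZZ$-basis of $L$, which I will do by determining $L$ explicitly.

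First I would reduce to $a_1=0$: multiplying by the twist $e^{cD}$ ($c\in\ZZ$) changes only the coordinate $a_1$, and $a_1\in\ZZ$ for every element of $L$ by Proposition 6; hence $(a_1,a_2,a_3)\in L$ if and only if $a_1\in\ZZ$ and $(0,a_2,a_3)\in L$. By Proposition 7, $e^{a_2D^3+a_3D^5}\in\Isom_e(K_0(\PP_5))$ if and only if it maps $\OO=\gamma_5$ into the lattice. Expanding $D=-\ln(1-\nabla)=\nabla+\frac{\nabla^2}{2}+\frac{\nabla^3}{3}+\frac{\nabla^4}{4}+\frac{\nabla^5}{5}$ modulo $\nabla^6$ and using $\nabla^j\gamma_5=\gamma_{5-j}$, I compute $D^3\gamma_5=\gamma_2+\frac32\gamma_1+\frac74\gamma_0$ and $D^5\gamma_5=\gamma_0$. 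Since $e^{a_2D^3+a_3D^5}=I+a_2D^3+a_3D^5$ modulo $D^6$, this gives the image $\gamma_5+a_2\gamma_2+\frac32 a_2\gamma_1+(\frac74 a_2+a_3)\gamma_0$, where $\gamma_j=\OO_{\PP_j}$.

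The integrality of the three nontrivial coefficients then reads $a_2\in\ZZ$, $\frac32 a_2\in\ZZ$ and $\frac74 a_2+a_3\in\ZZ$, which simplify to $a_2\in2\ZZ$ together with $a_3\equiv\frac{a_2}{4}\pmod{\ZZ}$. Combined with $a_1\in\ZZ$, this determines $L=\{(a_1,a_2,a_3):a_1\in\ZZ,\ a_2\in2\ZZ,\ a_3-\tfrac{a_2}{4}\in\ZZ\}$, whose standard $\ZZ$-basis is exactly $(1,0,0)$, $(0,2,\tfrac12)$, $(0,0,1)$. To finish, I identify each basis vector with the corresponding operator: by Proposition 4 the middle generator $e^{2D^3+\frac12 D^5}$ acts as $E\mapsto E\otimes F$ with $F=e^{2D^3+\frac12 D^5}(\OO)=\OO+2\OO_{\PP_2}+3\OO_{\PP_1}+4\OO_{\PP_0}$ (the image computed above at $a_2=2$, $a_3=\frac12$), matching the statement, while the outer two are the twist and the operator of part (2) of Proposition 8.

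The routine but delicate part is the explicit computation of $D^3\gamma_5$ through the $\nabla$-expansion and the subsequent lattice bookkeeping. The point I expect to require the most care is the interaction between the $D^3$- and $D^5$-coordinates: the \emph{pure} operator $e^{a_2D^3}$ lies in $L$ only for $a_2\in4\ZZ$ (the coefficient $\frac74 a_2$ of $\gamma_0$ must be an integer), whereas allowing a half-integer $D^5$-coefficient absorbs this obstruction and lowers the minimal attainable value of $a_2$ from $4$ to $2$. Getting this bookkeeping right — rather than any conceptual difficulty — is what makes the second generator come out as stated.
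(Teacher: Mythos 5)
Your proof is correct and takes essentially the same route as the paper: reduce to $a_1=0$ via Proposition 6, use $e^{bD^3+cD^5}=I+bD^3+cD^5$ applied to $\gamma_5$ (justified by Proposition 7), extract the integrality conditions $b\in\ZZ$, $\tfrac{3b}{2}\in\ZZ$, $\tfrac{7b}{4}+c\in\ZZ$, and take the generators $I+2D^3+\tfrac12 D^5$ and $I+D^5$, identified as tensor operators via Propositions 4 and 8. The only cosmetic differences are that you compute $D^3\gamma_5=\gamma_2+\tfrac32\gamma_1+\tfrac74\gamma_0$ through the $\nabla$-expansion where the paper differentiates $\gamma_5$ directly (the coefficients agree), and that you spell out the lattice $L$ and check the $\ZZ$-basis explicitly, which the paper leaves implicit.
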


\begin{proof}
    Again, the fact that $\Isom_e(K_0(\PP_5))\cong \ZZ^3$ follows from Proposition 5. Applying Proposition 6, we find the generator $E\mapsto E\otimes \OO(1)$, while others are of the form $e^{bD^3 + cD^5}$, where $b,c\in \RR$. Since $D^6 = 0$, we obtain the equality $$e^{bD^3 + cD^5} = (I + bD^3)(I+cD^5) = I + bD^3 + cD^5.$$
    
    The polynomial $\gamma_5(t) = \frac{(t+1)(t+2)(t+3)(t+4)(t+5)}{120}$ is mapped to $$\gamma_5(t) + b(\frac{t^2}{2} + 3t + \frac{17}{4}) + c = \gamma_5(t) + \frac{b}{2}t^2 + 3bt + (\frac{17b}{4} + c) = \gamma_5(t) + b\gamma_2(t) + \frac{3b}{2}\gamma_1(t) + (\frac{7b}{4} + c)$$ by the operator $I + bD^3 + cD^5$. So, $e^{bD^3 + cD^5}$ is an isometry if and only if the numbers $b$, $\frac{3b}{2}$, $\frac{7b}{4} + c$ are integers. From the fact that $\frac{3b}{2}\in \ZZ$ it follows that $b$ is even. Also, $\frac{7b}{4} + c\in \ZZ$, so either $b\in 2\ZZ\setminus 4\ZZ$ and $c\in \frac{1}{2}\ZZ\setminus \ZZ$ or $b\in 4\ZZ$ and $c\in \ZZ$. This gives us two generators of the group $\Isom_e(K_0(\PP_5))$, namely $I + 2D^3 + \frac{1}{2}D^5$ и $I + D^5$.
    
    The isometry $I + 2D^3 + \frac{1}{2}D^5$ maps $\gamma_5(t)$ to $\gamma_5(t) + 2\gamma_2(t) + 3\gamma_4(t) + 4\gamma_0(t)$. In terms of classes in $K_0(\PP_n)$, it maps $\OO$ to $\OO + 2\OO_{\PP_2} + 3\OO_{\PP_1} + 4\OO_{\PP_0}$. By Proposition 4, this isometry acts as $$E \mapsto E\otimes(\OO + 2\OO_{\PP_2} + 3\OO_{\PP_1} + 4\OO_{\PP_0}).$$
    
    As we already know, the isometry $I + D^5$ acts in the following way: $E\mapsto E\otimes (\OO + \OO_{\PP_0})$. This completes the proof.
\end{proof}

\begin{remark}
    Note that we cannot choose three generators of $\Isom_e(K_0(\PP_5))$ in the form  $e^{aD}$, $e^{bD^3}$, $e^{cD^5}$ for some $a,b,c\in \RR$.
\end{remark}

\begin{proposition}
    The group $\Isom_e(K_0(\PP_6))$ is isomorphic to $\ZZ^3$ and is generated by the operators
\begin{align*} 
& E\mapsto E\otimes \OO(1),\\
& E\mapsto E\otimes (\OO + 2\OO_{\PP_3} + 3\OO_{\PP_2} + 4\OO_{\PP_1} + 7\OO_{\PP_0}),\\
& E\mapsto E\otimes (\OO + 2\OO_{\PP_1} +  5\OO_{\PP_0}).
\end{align*}
\end{proposition}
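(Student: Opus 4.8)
The plan is to follow the template of the $n=5$ case (Proposition 14). By Proposition 5 the group $\Isom_e(K_0(\PP_6))$ is isomorphic to $\ZZ^3$, since $[\frac{7}{2}]=3$, and every element has the form $e^{a_1 D + bD^3 + cD^5}$ with $a_1,b,c\in\RR$. Proposition 6 immediately supplies one generator, the twisting $e^D\colon E\mapsto E\otimes\OO(1)$, and allows us to assume $a_1=0$ while searching for the remaining two. The key structural difference from $n=5$ is that now $D^7=0$ but $D^6\neq 0$, so expanding the exponential we get $e^{bD^3+cD^5}=I+bD^3+cD^5+\tfrac{b^2}{2}D^6$; the extra $D^6$-term must be carried along throughout.

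By Proposition 7 it suffices to check that $\OO=\gamma_6$ is sent into the lattice. First I would compute the action of $D^3,D^5,D^6$ on $\gamma_6$ in the basis $(\OO_{\PP_6},\dots,\OO_{\PP_0})=(\gamma_6,\dots,\gamma_0)$. Using $D=-\ln(1-\nabla)=\nabla+\tfrac12\nabla^2+\tfrac13\nabla^3+\cdots$ together with $\nabla^j\gamma_6=\gamma_{6-j}$, a short bookkeeping of the power series gives
\[
D^3\gamma_6=\gamma_3+\tfrac32\gamma_2+\tfrac74\gamma_1+\tfrac{15}{8}\gamma_0,\quad D^5\gamma_6=\gamma_1+\tfrac52\gamma_0,\quad D^6\gamma_6=\gamma_0.
\]
Substituting these, the image of $\gamma_6$ under $I+bD^3+cD^5+\tfrac{b^2}{2}D^6$ is
\[
\gamma_6+b\gamma_3+\tfrac{3b}{2}\gamma_2+\Bigl(\tfrac{7b}{4}+c\Bigr)\gamma_1+\Bigl(\tfrac{15b}{8}+\tfrac{5c}{2}+\tfrac{b^2}{2}\Bigr)\gamma_0.
\]
The requirement that all four coefficients be integers is then necessary (it is forced by integrality of the image of $\OO$) and, by Proposition 7, sufficient for $e^{bD^3+cD^5}$ to lie in $\Isom_e(K_0(\PP_6))$.

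The next step is to solve this system. The $\gamma_3$- and $\gamma_2$-coefficients force $b\in 2\ZZ$; writing $b=2m$ makes $\tfrac{b^2}{2}=2m^2$ automatically integral, so the $D^6$-term contributes nothing new and the surviving constraints reduce to $\tfrac{7m}{2}+c\in\ZZ$ and $\tfrac{15m}{4}+\tfrac{5c}{2}\in\ZZ$. Eliminating $c$ I expect to find that the admissible pairs form the rank-two lattice $\{(b,c)=(2m,\,2i-\tfrac{7m}{2}):m,i\in\ZZ\}$, for which $(b,c)=(2,\tfrac12)$ and $(b,c)=(0,2)$ is a basis. Finally I would translate these back into tensor form: by Proposition 4 the operator with $(b,c)=(2,\tfrac12)$ sends $\OO$ to $\OO+2\OO_{\PP_3}+3\OO_{\PP_2}+4\OO_{\PP_1}+7\OO_{\PP_0}$ and hence acts as tensoring by this class, while $(b,c)=(0,2)$ is exactly the operator $e^{2D^5}$ of Proposition 8, acting as $E\mapsto E\otimes(\OO+2\OO_{\PP_1}+5\OO_{\PP_0})$. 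Together with $e^D$ these give the three claimed generators.

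The main obstacle is the arithmetic step of solving the coupled integrality conditions and, crucially, verifying that the two produced operators generate the whole lattice rather than just a finite-index sublattice — the appearance of the half-integer $c=\tfrac12$ shows (as in Remark 3 for $n=5$) that one cannot reduce to generators of the naive form $e^{bD^3},e^{cD^5}$, so care is needed to pick a genuine $\ZZ$-basis. A secondary point requiring attention is the accurate evaluation of the fractional coefficients of $D^3\gamma_6$ (equivalently the elementary symmetric numbers of $1,\dots,6$); an arithmetic slip there would propagate into the wrong lattice. Everything else is routine once these coefficients and the lattice basis are pinned down.
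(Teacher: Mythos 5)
Your proposal is correct and follows essentially the same route as the paper: expand $e^{bD^3+cD^5}=I+bD^3+cD^5+\tfrac{b^2}{2}D^6$ using $D^7=0$, reduce via Proposition 7 to integrality of the coefficients of the image of $\gamma_6$ (which you compute correctly), solve the resulting congruences, and identify the generators $(b,c)=(2,\tfrac12)$ and $(0,2)$ as tensoring operators via Propositions 4 and 8. Your explicit parametrization of the solution lattice as $(b,c)=(2m,\,2i-\tfrac{7m}{2})$, together with the direct verification that the two chosen vectors form a genuine $\ZZ$-basis, is in fact slightly more careful than the paper's enumeration by congruence classes, whose first case ($b\in 2\ZZ\setminus 4\ZZ$, $c\in\tfrac12+2\ZZ$) is stated imprecisely: for $b\equiv 6\pmod 8$ one actually has $c\in\tfrac32+2\ZZ$.
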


\begin{proof}
    As before, we have the isometry $E\mapsto E\otimes \OO(1)$ while others can be written in the form $e^{bD^3 + cD^5}$, где $b,c\in \RR$. In our case $D^7 = 0$, so $$e^{bD^3 + cD^5} = (I + bD^3 + \frac{b^2}{2}D^6)(I+cD^5) = I + bD^3 + cD^5 + \frac{b^2}{2}D^6.$$ The operator $I + bD^3 + cD^5 + \frac{b^2}{2}D^6$ maps the polynomial $\gamma_6(t) = \frac{(t+1)(t+2)\ldots (t+6)}{720}$ to $$\gamma_6(t) + b(\frac{t^3}{6} + \frac{7t^2}{4} + \frac{35t}{6} + \frac{49}{8}) + c(t+\frac{7}{2}) + \frac{b^2}{2} = $$
    $$ = \gamma_6(t) + \frac{b}{6}t^3 + \frac{7b}{4}t^2 + (\frac{35b}{6} + c)t + (\frac{49b}{8} + \frac{7c}{2} + \frac{b^2}{2}) = $$
    $$ = \gamma_6(t) + b\gamma_3(t) + \frac{3b}{2}\gamma_2(t) + (\frac{7b}{4} + c)\gamma_1(t) + (\frac{15b}{8} + \frac{b^2}{2} + \frac{5c}{2}).$$ Therefore, the numbers $b$, $\frac{3b}{2}$, $\frac{7b}{4} + c$, $\frac{15b}{8} + \frac{b^2}{2} + \frac{5c}{2}$ must be integers. Since $\frac{3b}{2}$ is an integer, $b$ is even and hence $\frac{b^2}{2}$ is an integer. Thus, the numbers $\frac{7b}{4} + c$ and $\frac{15b}{8} + \frac{5c}{2}$ are also integers. For even $b$ there are two possibilities: either $b\in 2\ZZ\setminus 4\ZZ$ and $c\in \frac{1}{2} + 2\ZZ$, or $b\in 4\ZZ\setminus 8\ZZ$ and $c\in 1 + 2\ZZ$, or $b\in 8\ZZ$ and $c\in 2\ZZ$. The operators $$I + 2D^3 + \frac{1}{2}D^5 + 2D^6 (b=2, c=\frac{1}{2})$$ and $$I + 2D^5 (b=0, c = 2)$$ can be chosen as generators of the lattice.
    
    The operator $I + 2D^3 + \frac{1}{2}D^5 + 2D^6$ maps $\gamma_6(t)$ to $\gamma_6(t) + 2\gamma_3(t) + 3\gamma_4(t) + 4\gamma_1(t) + 7$. In other words, it maps $\OO$ to $\OO + 2\OO_{\PP_3} + 3\OO_{\PP_2} + 4\OO_{\PP_1} + 7\OO_{\PP_0}$. So, this isometry acts as $$E\mapsto E\otimes (\OO + 2\OO_{\PP_3} + 3\OO_{\PP_2} + 4\OO_{\PP_1} + 7\OO_{\PP_0}).$$
    
    The fact that the operator $I + 2D^5$ acts as $E\mapsto E\otimes (\OO + 5\OO_{\PP_0})$ was already proved in the previous section.
\end{proof}

\Addresses


\begin{thebibliography}{9}
\bibitem{B} A.A. Beilinson. Coherent sheaves on $\PP_n$ and problems of linear algebra. Funct.
Anal. Appl., 12 (1978), 68–69
\bibitem{GR} A.L. Gorodentsev, A.N. Rudakov. Exceptional vector bundles on projective spaces. Duke Math. J., 54 (1987), 115-130
\bibitem{N} D.Yu. Nogin. Helices of period 4 and Markov-type equations. Math. USSR Izv, 37 (1991) no. 1, 209-226
\bibitem{G}
A.L. Gorodentsev. Non-symmetric orthogonal geometry of Grothendieck rings of coherent sheaves on projective spaces. arXiv:alg-geom/9409005, 32 pages
\bibitem{S} J.-P. Serre. Lie algebras and Lie Groups. Benjamin, 1965
\end{thebibliography}
\end{document}